\definecolor{labelkey}{rgb}{0,0.08,0.45}
\definecolor{rekey}{rgb}{0,0.6,0.0}
\definecolor{Brown}{rgb}{0.45,0.0,0.05}
\DeclareMathOperator{\weakstarly}{\rightharpoondown_{\mathrm{w*}}}
\newcommand{\scal}[2]{\langle{{#1},{#2}}\rangle}
\newcommand{\RR}{\ensuremath{\mathbb R}}
\newcommand{\RX}{\ensuremath{\,\left]-\infty,+\infty\right]}}
\newcommand{\NN}{\ensuremath{\mathbb N}}
\newcommand{\menge}[2]{\big\{{#1} \mid {#2}\big\}}
\newcommand{\To}{\ensuremath{\rightrightarrows}}
\newcommand{\aff}{\operatorname{aff}}
\newcommand{\dom}{\ensuremath{\operatorname{dom}}}
\newcommand{\gra}{\ensuremath{\operatorname{gra}}}
\newcommand{\intdom}{\ensuremath{\operatorname{int}\operatorname{dom}}\,}
\newcommand{\inte}{\ensuremath{\operatorname{int}}}
\newcommand{\bd}{\ensuremath{\operatorname{bdry}}}
\renewcommand{\phi}{\ensuremath{\varphi}}
\newtheorem{theorem}{Theorem}[section]
\newtheorem{lemma}[theorem]{Lemma}
\newtheorem{fact}[theorem]{Fact}
\newtheorem{corollary}[theorem]{Corollary}
\newtheorem{proposition}[theorem]{Proposition}
\theoremstyle{plain}{\theorembodyfont{\rmfamily}
}
\theoremstyle{plain}{\theorembodyfont{\rmfamily}
}
\theoremstyle{plain}{\theorembodyfont{\rmfamily}
}
\theoremstyle{plain}{\theorembodyfont{\rmfamily}
}
\theoremstyle{plain}{\theorembodyfont{\rmfamily}
\newtheorem{remark}[theorem]{Remark}}
\newtheorem{problem}[theorem]{Open problem}
\theoremstyle{plain}{\theorembodyfont{\rmfamily}
}
\begin{document}


\title{\sffamily{Maximality of the sum of a maximally monotone
 linear relation and a maximally monotone operator}\\
{\small Dedicated to Petar Kenderov on the occasion of his seventieth birthday}}

\author{
Jonathan M. Borwein\thanks{CARMA, University of Newcastle,
Newcastle, New South Wales 2308, Australia. E-mail:
\texttt{jonathan.borwein@newcastle.edu.au}. Laureate Professor at
the University of Newcastle and Distinguished Professor at  King
Abdul-Aziz University, Jeddah.}\; and Liangjin\ Yao\thanks{CARMA,
University of Newcastle,
 Newcastle, New South Wales 2308, Australia.
E-mail:  \texttt{liangjin.yao@newcastle.edu.au}.}}

\date{December 18,  2012}
\maketitle

\begin{abstract} \noindent
The most famous open problem in Monotone Operator Theory
concerns the maximal monotonicity of the sum of two
maximally monotone operators provided that
Rockafellar's constraint qualification holds.

In this paper, we prove the maximal monotonicity of $A+B$ provided
that $A, B$ are maximally monotone and $A$ is a linear relation, as
soon as
 Rockafellar's constraint qualification holds:
$\dom A\cap\inte\dom B\neq\varnothing$. Moreover, $A+B$ is of type (FPV).

\end{abstract}

\noindent {\bfseries 2010 Mathematics Subject Classification:}\\
{Primary  47A06, 47H05;
Secondary
47B65, 47N10,
 90C25}

\noindent {\bfseries Keywords:}
Constraint qualification,
convex set,
Fitzpatrick function,
linear relation,
maximally monotone operator,
monotone operator,
monotone operator of type (FPV),
multifunction,
normal cone operator,
Rockafellar's sum theorem,
set-valued operator.

\section{Introduction}

Throughout this paper, we assume that
$X$ is a real Banach space with norm $\|\cdot\|$,
that $X^*$ is the continuous dual of $X$, and
that $X$ and $X^*$ are paired by $\scal{\cdot}{\cdot}$.
Let $A\colon X\To X^*$
be a \emph{set-valued operator}  (also known as a relation, point-to-set mapping or multifunction)
from $X$ to $X^*$, i.e., for every $x\in X$, $Ax\subseteq X^*$,
and let
$\gra A := \menge{(x,x^*)\in X\times X^*}{x^*\in Ax}$ be
the \emph{graph} of $A$.
Recall that $A$ is  \emph{monotone} if
\begin{equation}
\scal{x-y}{x^*-y^*}\geq 0,\quad \forall (x,x^*)\in \gra A\;\forall (y,y^*)\in\gra A,
\end{equation}
and \emph{maximally monotone} if $A$ is monotone and $A$ has no proper monotone extension
(in the sense of graph inclusion).
Let $A:X\rightrightarrows X^*$ be monotone and $(x,x^*)\in X\times X^*$.
 We say $(x,x^*)$ is \emph{monotonically related to}
$\gra A$ if
\begin{align*}
\langle x-y,x^*-y^*\rangle\geq0,\quad \forall (y,y^*)\in\gra A.\end{align*}
Let $A:X\rightrightarrows X^*$ be maximally monotone. We say $A$ is
\emph{of type (FPV)} if  for every open convex set $U\subseteq X$ such that
$U\cap \dom A\neq\varnothing$, the implication
\begin{equation*}
x\in U\,\text{and}\,(x,x^*)\,\text{is monotonically related to $\gra A\cap (U\times X^*)$}
\Rightarrow (x,x^*)\in\gra A
\end{equation*}
holds. We say $A$ is a \emph{linear relation} if $\gra A$ is a
linear subspace. Monotone operators have proven to be important
objects in modern Optimization and Analysis; see, e.g., the books
\cite{BC2011,BorVan,BurIus,ButIus,ph,Si,Si2,RockWets,Zalinescu,Zeidler2A,Zeidler2B}
and the references therein. We adopt standard notation used in these
books: $\dom A:= \menge{x\in X}{Ax\neq\varnothing}$ is the
\emph{domain} of $A$. Given a subset $C$ of $X$, $\inte C$ is the
\emph{interior} of $C$, $\bd{C}$ is the \emph{boundary} of $C$,
$\aff C$ is the \emph{affine hull} of $C$ and $\overline{C}$ is the
norm \emph{closure} of $C$. We set $C^{\bot}:= \{x^*\in X^*
\mid(\forall c\in C)\, \langle x^*, c\rangle=0\}$ and $S^{\bot}:=
\{x^{**}\in X^{**} \mid(\forall s\in S)\, \langle x^{**},
s\rangle=0\}$ for a set  $S\subseteq X^*$. We define $^{ic}C$ by
\begin{equation*}
^{ic}C:=\begin{cases}^{i}C,\,&\text{if $\aff C$ is closed};\\
\varnothing,\,&\text{otherwise},
\end{cases}\end{equation*} where $^{i}C$ \cite{Zalinescu}
 is the \emph{intrinsic core} or \emph{relative algebraic interior} of $C$,  defined by
$^{i}C:=\{a\in C\mid \forall x\in \aff(C-C),
\exists\delta>0, \forall\lambda\in\left[0,\delta\right]:
a+\lambda x\in C\}$.

The \emph{indicator function} of $C$, written as $\iota_C$, is defined
at $x\in X$ by
\begin{align}
\iota_C (x):=\begin{cases}0,\,&\text{if $x\in C$;}\\
\infty,\,&\text{otherwise}.\end{cases}\end{align}
If $D\subseteq X$, we set $C-D=\{x-y\mid x\in C, y\in D\}$.
  For every $x\in X$, the normal cone operator of $C$ at $x$
is defined by $N_C(x):= \menge{x^*\in
X^*}{\sup_{c\in C}\scal{c-x}{x^*}\leq 0}$, if $x\in C$; and $N_C(x)=\varnothing$,
if $x\notin C$.
For $x,y\in X$, we set $\left[x,y\right]:=\{tx+(1-t)y\mid 0\leq t\leq 1\}$.
 Given $f\colon X\to \RX$, we set
$\dom f:= f^{-1}(\RR)$
. We say $f$ is \emph{proper} if $\dom f\neq\varnothing$.
Let $f$ be proper.
Then
   $\partial f\colon X\To X^*\colon
   x\mapsto \menge{x^*\in X^*}{(\forall y\in
X)\; \scal{y-x}{x^*} + f(x)\leq f(y)}$ is the \emph{subdifferential
operator} of $f$. We also set $P_X: X\times X^*\rightarrow X\colon
(x,x^*)\mapsto x$. Finally,  the \emph{open unit ball} in $X$ is
denoted by $U_X:= \menge{x\in X}{\|x\|< 1}$, the \emph{closed unit
ball} in $X$ is denoted by $B_X:= \menge{x\in X}{\|x\|\leq 1}$, and
$\NN:=\{1,2,3,\ldots\}$. We denote by $\longrightarrow$ and
$\weakstarly$ the norm convergence and weak$^*$ convergence of
nets,  respectively.

Let $A$ and $B$ be maximally monotone operators from $X$ to
$X^*$.
Clearly, the \emph{sum operator} $A+B\colon X\To X^*\colon x\mapsto
Ax+Bx: = \menge{a^*+b^*}{a^*\in Ax\;\text{and}\;b^*\in Bx}$
is monotone.
Rockafellar established the following very important result in 1970.
\begin{theorem}[Rockafellar's sum theorem]
\emph{(See  \cite[Theorem~1]{Rock70} or \cite{BorVan}.)} Suppose
that $X$ is reflexive. Let $A, B: X\rightrightarrows  X^*$ be
maximally monotone. Assume that $A$ and  $B$  satisfy the classical
\emph{constraint qualification}\[\dom A \cap\intdom B\neq
\varnothing.\] Then $A+B$ is maximally monotone.
\end{theorem}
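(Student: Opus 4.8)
The plan is not to work with $A+B$ directly but to encode its graph as the set where a single proper lower semicontinuous convex function on $X\times X^*$ agrees with the duality pairing, using the \emph{Fitzpatrick functions} of $A$ and $B$ and the representative-function characterization of maximal monotonicity that is available in reflexive spaces. For a monotone operator $M$ write $F_M(x,x^*):=\sup_{(a,a^*)\in\gra M}\big(\scal{x}{a^*}+\scal{a}{x^*}-\scal{a}{a^*}\big)$; let $\pi\colon X\times X^*\to\RR$, $(x,x^*)\mapsto\scal{x}{x^*}$; and — using reflexivity to identify $(X\times X^*)^*=X^*\times X$ — let $\pi_*\colon X^*\times X\to\RR$, $(x^*,x)\mapsto\scal{x}{x^*}$. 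I take as known (Fitzpatrick; Penot; Burachik--Svaiter; Simons): if $M$ is maximally monotone then $F_M$ is proper lsc convex, $\pi\leq F_M$, $\pi_*\leq F_M^*$, and $\gra M=\{(x,x^*)\mid F_M(x,x^*)=\scal{x}{x^*}\}$; conversely, if $h\colon X\times X^*\to\RX$ is proper lsc convex with $\pi\leq h$ and $\pi_*\leq h^*$, then $M_h:=\{(x,x^*)\mid h(x,x^*)=\scal{x}{x^*}\}$ is maximally monotone, the converse being proved by minimizing a coercive lsc convex perturbation of $h$ on the reflexive space $X\times X^*$. Since $A+B$ is plainly monotone, it suffices to produce such an $h$ with $M_h=\gra(A+B)$.

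First I would recast Rockafellar's constraint qualification in convex-analytic form. Since monotonicity forces $F_A(x,x^*)=\scal{x}{x^*}$ for every $(x,x^*)\in\gra A$, we have $\dom A\subseteq P_X(\dom F_A)$, and likewise $\dom B\subseteq P_X(\dom F_B)$; hence $\intdom B\subseteq\inte P_X(\dom F_B)$, and $\dom A\cap\intdom B\neq\varnothing$ yields $0\in\inte\big(P_X(\dom F_A)-P_X(\dom F_B)\big)$. This is exactly the interiority hypothesis that activates the Attouch--Brezis and Rockafellar--Fenchel duality theorems in the \emph{shared primal variable} $x$, and from here on only this interiority is used.

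Next I would take $h(x,x^*):=\inf_{u^*\in X^*}\big(F_A(x,u^*)+F_B(x,x^*-u^*)\big)$, the partial infimal convolution of $F_A$ and $F_B$ in the dual variable. It is convex; adding $F_A(x,u^*)\geq\scal{x}{u^*}$ and $F_B(x,x^*-u^*)\geq\scal{x}{x^*-u^*}$ gives $\pi\leq h$; and choosing $x^*=a^*+b^*$ with $a^*\in Ax$, $b^*\in Bx$ gives $h(x,a^*+b^*)\leq F_A(x,a^*)+F_B(x,b^*)=\scal{x}{a^*+b^*}$, so $\gra(A+B)\subseteq M_h$. Now the constraint qualification enters three times, each a routine consequence of the interiority of the previous paragraph once the bookkeeping is arranged: (a) $h$ is lower semicontinuous (the partial inf-convolution of two representative functions is again one, in particular lsc, under this interiority); (b) its conjugate is computed with no duality gap, again as a partial inf-convolution in the dual variable, $h^*(x^*,x)=\inf_{p^*+q^*=x^*}\big(F_A^*(p^*,x)+F_B^*(q^*,x)\big)$, whence $\pi_*\leq F_A^*$ and $\pi_*\leq F_B^*$ force $\pi_*\leq h^*$; (c) wherever $h(x,x^*)=\scal{x}{x^*}$, the infimum defining $h$ is attained at some $u^*$, and then $F_A(x,u^*)+F_B(x,x^*-u^*)=\scal{x}{u^*}+\scal{x}{x^*-u^*}$ combined with $F_A(x,u^*)\geq\scal{x}{u^*}$ and $F_B(x,x^*-u^*)\geq\scal{x}{x^*-u^*}$ forces equality in both, i.e. $(x,u^*)\in\gra A$ and $(x,x^*-u^*)\in\gra B$ (here $\gra A=\{F_A=\pi\}$ and $\gra B=\{F_B=\pi\}$ are used), so $x^*\in Ax+Bx$; thus $M_h\subseteq\gra(A+B)$. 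By (a), (b) and the representative-function characterization, $M_h$ is maximally monotone; combined with the two inclusions, $M_h=\gra(A+B)$, so $A+B$ is maximally monotone.

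\textbf{Main obstacle.} The hard part will be items (a)--(c): turning Rockafellar's purely geometric hypothesis $\dom A\cap\intdom B\neq\varnothing$ into the exact Fenchel/Attouch--Brezis duality statements that $h$ is a bona fide representative function — lower semicontinuous and with $\pi_*\leq h^*$ — and that its defining infimum is attained at each of its touching points. Everything else is formal; the single non-elementary input is the representative-function characterization itself, whose converse half ($\pi\leq h$, $\pi_*\leq h^*$, $h$ proper lsc convex $\Rightarrow M_h$ maximally monotone) rests on minimizing a coercive lsc convex function on the reflexive space $X\times X^*$ and has no analogue in a general Banach space — which is precisely why the unrestricted two-operator sum problem remains open there.
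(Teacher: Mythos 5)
The paper does not actually prove this statement: it is quoted as a known theorem with references to Rockafellar's original article \cite{Rock70} and to \cite{BorVan}, so there is no in-paper argument to compare against. Your proposal is nonetheless a sound and recognizable proof strategy --- it is essentially the modern convex-analytic proof via Fitzpatrick/representative functions that appears in the cited reference \cite{BorVan} and in Simons' monograph, and it is quite different from Rockafellar's original 1970 argument (which renorms $X$ and $X^*$ to be strictly convex with strictly convex duals and then verifies Minty's surjectivity criterion $\ran(A+B+J)=X^*$). The skeleton is correct: the reduction of the constraint qualification to $0\in\inte\big(P_X(\dom F_A)-P_X(\dom F_B)\big)$ is right (and is exactly the hypothesis that the present paper exploits through its Fact~\ref{f:referee1}, which is the non-reflexive abstraction of your machinery); the two inclusions $\gra(A+B)\subseteq M_h$ and $M_h\subseteq\gra(A+B)$ are argued correctly given exactness; and you correctly isolate reflexivity as entering only through the converse half of the representative-function criterion. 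Your items (a)--(c) are indeed where all the work is, and they do follow from the two-variable (Simons--Z\u{a}linescu) version of the Attouch--Brezis theorem: under the closed-subspace condition one gets $h^*(x^*,x)=\min_{u^*}\big(F_A^*(u^*,x)+F_B^*(x^*-u^*,x)\big)$ with attained minimum, which delivers (b) and, applied on the dual side, (c). The one soft spot is (a): lower semicontinuity of the partial infimal convolution $h=F_A\,\square_2\,F_B$ is not automatic and is the least standard of the three claims; the usual way around it is to apply the representative-function criterion not to $h$ but to the automatically lower semicontinuous function $h^{*\top}$ (or to $\closu h=h^{**}$, which still majorizes $\pi$ since $\pi$ is continuous, and has the same conjugate), and then to identify $\{h^{*}=\pi_*\}$ with $\gra(A+B)$ using the exactness in (b) together with $\{F_A^*=\pi_*\}=\{(x^*,x):(x,x^*)\in\gra A\}$ in the reflexive setting. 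With that adjustment the argument closes, so I would call this a correct proof outline by a genuinely different (and in fact more informative) route than the original one the paper cites.
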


The most significant open problem in the theory concerns  the
maximal monotonicity of the sum of two maximally monotone operators
in general Banach spaces, which is called the ``sum problem''. Some
recent developments on the sum problem can be found in  Simons'
monograph \cite{Si2} and \cite{Bor1,Bor2,Bor3,BorVan, BY2, ZalVoi,
MarSva5,VV2,Yao3,Yao2,YaoPhD}. It is known, among other things, that
the sum theorem holds under Rockafellar's constraint qualification
when both operators are of dense type or when each operator has
nonempty domain interior \cite[Ch. 8]{BorVan} and \cite{ZalVoi2}.

Here we focus on the  case when $A$ is a maximally monotone linear
relation, and $B$ is maximally monotone
 such that
$\dom A\cap\inte\dom B\neq\varnothing$. In Theorem \ref{PGV:1} we
shall  show that $A+B$ is maximally monotone.

The remainder of this paper is organized as follows. In
Section~\ref{s:aux}, we collect auxiliary results for future
reference and for the reader's convenience. The proof of our main
result (Theorem~\ref{PGV:1}) forms the bulk of Section~\ref{s:main}.
In Section \ref{sec:more} various other consequences and related results are presented.

\section{Auxiliary Results}
\label{s:aux}

We start with a well known result from Rockafellar.
\begin{fact}[Rockafellar]
\emph{(See \cite[Theorem~1]{Rock69} or \cite[Theorem~27.1 and Theorem~27.3]{Si2}.)}
\label{f:referee02c}
Let $A:X\To X^*$ be  maximal monotone with $\inte\dom A\neq\varnothing$. Then
$\inte\dom A=\inte\overline{\dom A}$ and $\overline{\dom A}$ is convex.
\end{fact}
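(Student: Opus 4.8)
The plan is to reduce both assertions to the single inclusion
\[
\inte\conv\dom A\subseteq\dom A,
\]
which I will call $(\star)$, and then dispatch the rest by convexity bookkeeping. \emph{Granting $(\star)$:} since $\inte\conv\dom A$ is open, $(\star)$ upgrades to $\inte\conv\dom A\subseteq\inte\dom A$; the reverse inclusion being trivial, $\inte\dom A=\inte\conv\dom A$, which is convex and, by hypothesis, nonempty. For any convex set $C$ with $\inte C\neq\varnothing$ one has $\overline C=\overline{\inte C}$ and $\inte\overline C=\inte C$; applying this with $C=\conv\dom A$ gives $\conv\dom A\subseteq\overline{\inte\conv\dom A}=\overline{\inte\dom A}\subseteq\overline{\dom A}$, so $\overline{\dom A}=\overline{\conv\dom A}$ is convex, and $\inte\overline{\dom A}=\inte\overline{\conv\dom A}=\inte\conv\dom A=\inte\dom A$. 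Both conclusions of the Fact follow.

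So it remains to prove $(\star)$; fix $x_0\in\inte\conv\dom A$. The first ingredient is \emph{Rockafellar's local boundedness theorem}: a monotone operator is locally bounded at every point of the interior of the convex hull of its domain, so there are $r>0$ and $M\geq0$ with $\|x^*\|\leq M$ whenever $(x,x^*)\in\gra A$ and $\|x-x_0\|\leq r$. This is the content of \cite[Theorem~1]{Rock69} (see also \cite[Theorem~27.1]{Si2}); its proof is a Baire-category argument, which I would cite or reproduce. The second ingredient is that $x_0\in\overline{\dom A}$ --- equivalently, that the domain of a maximal monotone operator cannot miss an open subset of $\inte\conv\dom A$. \emph{This is the substantive classical input and I expect it to be the main obstacle.} It is part of Rockafellar's analysis of the domain of a maximal monotone operator \cite{Rock69} (see also \cite[Theorem~27.3]{Si2}), proved by probing along line segments from $x_0$ toward points of $\dom A$; a useful elementary companion observation is that any finite representation $x_0=\sum_i\lambda_i a_i$ with $a_i\in\dom A$ (which exists since $x_0\in\conv\dom A$) yields, for arbitrary $a_i^*\in Aa_i$ and every $(y,y^*)\in\gra A$, the estimate $\langle x_0-y,\ \sum_i\lambda_i a_i^*-y^*\rangle\geq\sum_i\lambda_i\langle x_0-a_i,a_i^*\rangle$, so that $\big(x_0,\sum_i\lambda_ia_i^*\big)$ is monotonically related to $\gra A$ up to a controllable error term; forcing that error to zero is where local boundedness and a limiting argument enter.

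Finally, \emph{the passage from $x_0\in\overline{\dom A}$ to $x_0\in\dom A$ is clean and is the step I would write out in full.} Pick a net $(x_\alpha,x_\alpha^*)_\alpha$ in $\gra A$ with $x_\alpha\to x_0$; passing to a cofinal subnet we may assume $\|x_\alpha-x_0\|\leq r$ throughout, hence $\|x_\alpha^*\|\leq M$. Since a closed ball of $X^*$ is weak$^*$-compact (Banach--Alaoglu), after a further subnet we have $x_\alpha^*\weakstarly u^*$ for some $u^*\in X^*$, while still $x_\alpha\to x_0$. Fix $(y,y^*)\in\gra A$. Monotonicity gives $\langle x_\alpha-y,x_\alpha^*-y^*\rangle\geq0$; writing $\langle x_\alpha-y,x_\alpha^*\rangle=\langle x_\alpha-x_0,x_\alpha^*\rangle+\langle x_0-y,x_\alpha^*\rangle$, the first summand has absolute value at most $\|x_\alpha-x_0\|\,M\to0$ and the second converges to $\langle x_0-y,u^*\rangle$, while $\langle x_\alpha-y,y^*\rangle\to\langle x_0-y,y^*\rangle$. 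Taking limits yields $\langle x_0-y,u^*-y^*\rangle\geq0$, so $u^*$ is monotonically related to $\gra A$; maximality of $A$ then forces $(x_0,u^*)\in\gra A$, i.e. $x_0\in\dom A$. This establishes $(\star)$, hence the Fact. The role of local boundedness is precisely to kill the cross term $\langle x_\alpha-x_0,x_\alpha^*\rangle$, which is what makes the argument valid in an arbitrary, possibly non-reflexive, Banach space.
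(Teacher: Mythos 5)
This Fact is imported by the paper without proof---it is attributed to \cite[Theorem~1]{Rock69} and \cite[Theorems~27.1 and 27.3]{Si2}---so there is no internal argument to compare yours against, and I can only judge the proposal as a free-standing proof. Your architecture is the standard one, and the parts you actually execute are correct: granting $(\star)$ $\inte\conv\dom A\subseteq\dom A$, the convexity bookkeeping (via $\overline{C}=\overline{\inte C}$ and $\inte\overline{C}=\inte C$ for a convex $C$ with nonempty interior) does deliver both conclusions, and your closing Banach--Alaoglu argument taking $x_0\in\overline{\dom A}\cap\inte\conv\dom A$ to $x_0\in\dom A$ is complete: local boundedness kills the cross term $\langle x_\alpha-x_0,x_\alpha^*\rangle$ and maximality finishes.

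The genuine gap is your ``second ingredient'', $\inte\conv\dom A\subseteq\overline{\dom A}$. This is not a companion observation to the Fact; together with local boundedness it essentially \emph{is} the Fact, and you neither prove it nor cite anything independent of it---\cite[Theorem~27.3]{Si2} is one of the two sources the paper itself cites for the statement under proof, so invoking it at this point is circular. The sketch you offer does not close the gap: writing $x_0=\sum_i\lambda_i a_i$ and forming $\sum_i\lambda_i a_i^*$ leaves the error term $\sum_i\lambda_i\langle x_0-a_i,a_i^*\rangle$, but the $a_i$ are fixed points of $\dom A$ at positive distance from $x_0$ and the sets $Aa_i$ may be unbounded, so local boundedness---which only controls $A$ on a small ball around $x_0$---gives no purchase on this term, and no ``limiting argument'' is visible that would force it to zero. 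The classical proofs of this density step are of a different character (Rockafellar's probing of maximality along segments from $x_0$ into $\dom A$, or the modern route via $\dom A\subseteq P_X(\dom F_A)\subseteq\overline{\conv\dom A}$ together with $\inte P_X(\dom F_A)\subseteq\dom A$), and some such argument must be supplied before the proposal counts as a proof rather than a correct reduction of the Fact to its hardest part.
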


The Fitzpatrick function below is a very useful tool in Monotone
Operator  Theory, which by now has been applied comprehensively.
\begin{fact}[Fitzpatrick]
\emph{(See {\cite[Corollary~3.9]{Fitz88}}.)}
\label{f:Fitz}
Let $A\colon X\To X^*$ be maximally monotone,  and set
\begin{equation}
F_A\colon X\times X^*\to\RX\colon
(x,x^*)\mapsto \sup_{(a,a^*)\in\gra A}
\big(\scal{x}{a^*}+\scal{a}{x^*}-\scal{a}{a^*}\big),
\end{equation}
 the \emph{Fitzpatrick function} associated with $A$.
Then for every $(x,x^*)\in X\times X^*$, the inequality
$\scal{x}{x^*}\leq F_A(x,x^*)$ is true, and  equality holds if and
only if $(x,x^*)\in\gra A$.
\end{fact}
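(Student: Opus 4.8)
The plan is to reduce the whole statement to one elementary identity plus a single appeal to maximality. First I would record, for every $(a,a^*)\in\gra A$, the expansion
\[
\scal{x}{a^*}+\scal{a}{x^*}-\scal{a}{a^*}=\scal{x}{x^*}-\scal{x-a}{x^*-a^*},
\]
which is immediate. Taking the supremum over $\gra A$ turns the definition of $F_A$ into
\[
F_A(x,x^*)=\scal{x}{x^*}-\inf_{(a,a^*)\in\gra A}\scal{x-a}{x^*-a^*},
\]
so the entire assertion becomes: the infimum on the right is always $\le 0$, with equality to $0$ precisely when $(x,x^*)\in\gra A$. Note first that $\gra A\neq\varnothing$ — the empty operator is properly extended by any singleton, hence is not maximally monotone — so the supremum defining $F_A$ is never $\minf$.

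For the inequality $\scal{x}{x^*}\le F_A(x,x^*)$ I would argue by contradiction. If $\inf_{(a,a^*)\in\gra A}\scal{x-a}{x^*-a^*}>0$, then in particular $\scal{x-a}{x^*-a^*}\ge 0$ for every $(a,a^*)\in\gra A$, i.e.\ $(x,x^*)$ is monotonically related to $\gra A$. Were $(x,x^*)\notin\gra A$, the set $\gra A\cup\{(x,x^*)\}$ would be the graph of a monotone operator properly extending $A$, contradicting maximality; so $(x,x^*)\in\gra A$. But then the feasible choice $(a,a^*):=(x,x^*)$ gives $\scal{x-a}{x^*-a^*}=0$, forcing the infimum to be $\le 0$ — a contradiction. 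Hence the infimum is $\le 0$ and the inequality holds (trivially so when $F_A(x,x^*)=\pinf$).

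It remains to settle when equality holds. If $(x,x^*)\in\gra A$, monotonicity of $A$ gives $\scal{x-a}{x^*-a^*}\ge 0$ for all $(a,a^*)\in\gra A$, while $(a,a^*):=(x,x^*)$ realises the value $0$; thus the infimum equals $0$ and $F_A(x,x^*)=\scal{x}{x^*}$. Conversely, if $F_A(x,x^*)=\scal{x}{x^*}$ then the infimum is $0$, so $0$ is a lower bound for $\scal{x-a}{x^*-a^*}$ over $\gra A$, i.e.\ $(x,x^*)$ is again monotonically related to $\gra A$, and maximality forces $(x,x^*)\in\gra A$ exactly as above. The only step needing care — and the only role played by maximality — is this extension argument: one should check that adjoining the single pair $(x,x^*)$ preserves monotonicity, which holds because the new-versus-new comparison is $0\ge 0$, the new-versus-old comparisons are precisely monotone-relatedness, and the old-versus-old comparisons are the monotonicity of $A$.
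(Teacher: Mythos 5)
Your proof is correct. The paper states this as a Fact and gives no proof of its own (it simply cites Fitzpatrick's Corollary~3.9), so there is nothing to compare against; your argument --- rewriting $F_A(x,x^*)=\scal{x}{x^*}-\inf_{(a,a^*)\in\gra A}\scal{x-a}{x^*-a^*}$ and then using maximality exactly once, via the observation that a monotonically related point not in the graph would yield a proper monotone extension --- is the standard and complete derivation, including the needed remarks that $\gra A\neq\varnothing$ and that the single-point extension is itself monotone.
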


\begin{fact}
\emph{(See \cite[Theorem~3.4 and Corollary~5.6]{Voi1}, or \cite[Theorem~24.1(b)]{Si2}.)}
\label{f:referee1}
Let $A, B:X\To X^*$ be maximally monotone operators. Assume
$\bigcup_{\lambda>0} \lambda\left[P_X(\dom F_A)-P_X(\dom F_B)\right]$
is a closed subspace.
If
\begin{equation}
F_{A+B}\geq\langle \cdot,\,\cdot\rangle\;\text{on \; $X\times X^*$},
\end{equation}
then $A+B$ is maximally monotone.
\end{fact}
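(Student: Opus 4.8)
The plan is to realize $A+B$ through a convex ``representative'' function built from $F_A$ and $F_B$, and to use the constraint qualification to make that function well enough behaved that the Fitzpatrick‑function characterization of maximality applies. Set
\[
\theta\colon X\times X^*\to\RXX,\qquad
\theta(x,x^*):=\inf_{u^*\in X^*}\big[F_A(x,x^*-u^*)+F_B(x,u^*)\big].
\]
Since $(x,x^*,u^*)\mapsto F_A(x,x^*-u^*)+F_B(x,u^*)$ is jointly convex, $\theta$ is convex. First I would record the properties of $\theta$ available with \emph{no} qualification. Testing the definitions of $F_A,F_B$ at $(a,a^*)\in\gra A$ and $(a,b^*)\in\gra B$ and adding shows, for all $x\in X$, $x^*,u^*\in X^*$ and all $(a,a^*+b^*)\in\gra(A+B)$ with $a^*\in Aa$, $b^*\in Ba$,
\[
F_A(x,x^*-u^*)+F_B(x,u^*)\ \ge\ \scal{x}{a^*+b^*}+\scal{a}{x^*}-\scal{a}{a^*+b^*}.
\]
Taking the supremum over $\gra(A+B)$ and the infimum over $u^*$ yields $\theta\ge F_{A+B}$, hence, by the hypothesis $F_{A+B}\ge\scal{\cdot}{\cdot}$, $\theta\ge\scal{\cdot}{\cdot}$ throughout $X\times X^*$. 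Taking instead $u^*=b^*$ gives $\theta(a,a^*+b^*)\le F_A(a,a^*)+F_B(a,b^*)=\scal{a}{a^*}+\scal{a}{b^*}=\scal{a}{a^*+b^*}$ by Fact~\ref{f:Fitz}. So $\theta$ is convex, dominates $\scal{\cdot}{\cdot}$ everywhere, and agrees with it on $\gra(A+B)$; a short convexity argument then shows that the set $\{(x,x^*):\theta(x,x^*)=\scal{x}{x^*}\}$ is automatically monotone and contains $\gra(A+B)$.

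Next I would reduce maximality to an attainment statement. Let $(z,z^*)$ be monotonically related to $\gra(A+B)$; I must show $(z,z^*)\in\gra(A+B)$. If I can produce $\bar u^*\in X^*$ with $F_A(z,z^*-\bar u^*)+F_B(z,\bar u^*)\le\scal{z}{z^*}$, then, because $F_A\ge\scal{\cdot}{\cdot}$ and $F_B\ge\scal{\cdot}{\cdot}$, equality must hold in each, so by Fact~\ref{f:Fitz} $z^*-\bar u^*\in Az$ and $\bar u^*\in Bz$, i.e.\ $(z,z^*)\in\gra(A+B)$. Equivalently, I need $\theta(z,z^*)\le\scal{z}{z^*}$ with the infimum defining $\theta(z,z^*)$ attained. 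Monotone‑relatedness of $(z,z^*)$ to $\gra(A+B)$ says exactly $F_{A+B}(z,z^*)\le\scal{z}{z^*}$, so by hypothesis $F_{A+B}(z,z^*)=\scal{z}{z^*}$, and the first paragraph already gives $\theta(z,z^*)\ge F_{A+B}(z,z^*)=\scal{z}{z^*}$; only the reverse inequality, with attainment, remains.

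This reverse inequality is exactly where the constraint qualification does its work, and it is the crux of the whole proof. When one conjugates $\theta$ over the canonical pairing of $X\times X^*$ with $X^*\times X^{**}$, the inner minimization over $u^*$ turns into a \emph{sum in the shared variable $x$}, and the two summands, as functions of $x$, have effective domains $P_X(\dom F_A)$ and $P_X(\dom F_B)$; thus the hypothesis that $\bigcup_{\lambda>0}\lambda\big[P_X(\dom F_A)-P_X(\dom F_B)\big]$ is a closed subspace is precisely the Attouch--Br\'ezis / Rockafellar--Fenchel constraint qualification for that sum. It forces the partial infimal convolution $\theta$ to be lower semicontinuous and \emph{exact} --- the infimum in $\theta(x,x^*)$ attained whenever finite --- and it makes the associated duality tight, which yields $\theta(z,z^*)\le\scal{z}{z^*}$ together with the attainment needed to finish via the previous paragraph. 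Carrying this out rigorously requires careful bookkeeping with the bidual and with the partial conjugates of $F_A$ and $F_B$ in a possibly non‑reflexive space; this is the main obstacle, and it is exactly the content of \cite{Voi1} (and \cite[\S24]{Si2}).
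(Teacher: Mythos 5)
The paper offers no proof of this statement: it is imported as a Fact, with citations to Voisei \cite{Voi1} and Simons \cite[Theorem~24.1(b)]{Si2}, so there is no internal argument to compare yours against and I can only assess your proposal on its own terms.

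Your framework is the right one, and it is essentially the one used in the cited sources: form the partial infimal convolution $\theta(x,x^*)=\inf_{u^*}\left[F_A(x,x^*-u^*)+F_B(x,u^*)\right]$, observe that $\theta\ge F_{A+B}\ge\pscal$ and that $\theta\le\pscal$ on $\gra(A+B)$, and reduce maximality to producing, for each monotonically related $(z,z^*)$, a splitting $z^*=(z^*-\bar u^*)+\bar u^*$ with $F_A(z,z^*-\bar u^*)+F_B(z,\bar u^*)\le\scal{z}{z^*}$. Those reductions are all correct. The genuine gap is that the entire content of the theorem sits in the step you describe in one sentence and then defer to the references, and the mechanism you sketch for that step does not work as stated. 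Two concrete problems. First, the Attouch--Br\'ezis theorem applied under the hypothesis that $\bigcup_{\lambda>0}\lambda\left[P_X(\dom F_A)-P_X(\dom F_B)\right]$ is a closed subspace yields exactness and validity of the \emph{conjugate} formula $\theta^*(y^*,y^{**})=\min_{v^*\in X^*}\left[F_A^*(y^*-v^*,y^{**})+F_B^*(v^*,y^{**})\right]$; it does not make $\theta$ itself lower semicontinuous, nor does it make the infimum defining $\theta(x,x^*)$ attained --- that would require a qualification on the domains of the partial conjugates $F_A(x,\cdot)^*$ and $F_B(x,\cdot)^*$ in $X^{**}$, which is not assumed. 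Second, to extract $\theta(z,z^*)\le\scal{z}{z^*}$ from ``tight duality'' you would need the dual lower bound $\theta^*\ge\pscal$ on $X^*\times X^{**}$, hence $F_A^*\ge\pscal$ and $F_B^*\ge\pscal$ there; but that inequality is precisely the statement that the operator is of type (NI), which is known to fail for some maximally monotone operators on nonreflexive spaces and is not among the hypotheses. The proofs in \cite{Voi1} and \cite[\S 24]{Si2} are built specifically to get around this obstruction (restricting the dual inequality to the canonical image of $X$ in $X^{**}$, respectively using Simons' sharpened Fenchel duality), and none of that is reproduced or replaced in your proposal. What you have is an accurate road map of the standard argument with its decisive step missing.
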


We next introduce some properties of type (FPV) operators.

\begin{fact}[Simons]
\emph{(See \cite[Theorem~46.1]{Si2}.)}
\label{S:referee01}
Let $A:X\To X^*$ be a maximally monotone linear relation.
Then $A$ is of type (FPV).
\end{fact}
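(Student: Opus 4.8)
\noindent\emph{Proposed approach.}
Fix an open convex $U\subseteq X$ with $U\cap\dom A\neq\varnothing$ and a pair $(z,z^*)$ with $z\in U$ that is monotonically related to $\gra A\cap(U\times X^*)$; we must show $(z,z^*)\in\gra A$, equivalently -- since $A$ is maximally monotone -- that $\langle z-a,z^*-a^*\rangle\ge0$ for \emph{every} $(a,a^*)\in\gra A$. The structural facts used throughout are that $\gra A$ is a subspace, so $(0,0)\in\gra A$, whence $\langle a,a^*\rangle\ge0$ on $\gra A$ and $A0=(\dom A)^{\bot}=(\overline{\dom A})^{\bot}$; that $\dom A$ is a subspace; and that affine combinations of graph points lie in $\gra A$. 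Fix also $(b,b^*)\in\gra A$ with $b\in U$.

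Two cases are settled directly. If $z\notin\overline{\dom A}$, pick $x_0^*\in(\overline{\dom A})^{\bot}$ with $\langle z,x_0^*\rangle\neq0$; since $x_0^*\in A0$, each $(b,b^*+tx_0^*)$ lies in $\gra A\cap(U\times X^*)$, so monotone relatedness gives $\langle z-b,z^*-b^*\rangle\ge t\langle z,x_0^*\rangle$ for all $t\in\RR$ (using $\langle b,x_0^*\rangle=0$), which is absurd as $\langle z,x_0^*\rangle\neq0$; so this case cannot occur. If $z\in\dom A$, fix $\bar z^*\in Az$, and for $a_0\in U\cap\dom A$ with $a_0^*\in Aa_0$ follow the segment $s\mapsto\bigl(z+s(a_0-z),\ \bar z^*+s(a_0^*-\bar z^*)\bigr)\in\gra A$, which stays in $U\times X^*$ for $s$ in a neighbourhood of $[0,1]$; monotone relatedness along it reads, for $s$ near $0$, $s\langle z-a_0,z^*-\bar z^*\rangle+s^2\langle z-a_0,\bar z^*-a_0^*\rangle\ge0$, so the linear coefficient $\langle z-a_0,z^*-\bar z^*\rangle$ vanishes; letting $a_0$ range over $U\cap\dom A$, a relative neighbourhood of $z$ in the subspace $\dom A$, yields $z^*-\bar z^*\in(\dom A)^{\bot}=A0$, i.e.\ $z^*\in Az$.

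The remaining, essential, case is $z\in\overline{\dom A}\setminus\dom A$, where the slope of $A$ along segments is no longer controlled. Here I would argue by approximation: since $z\in\overline{\dom A}$ and $U$ is open, choose a net $(c_\alpha)$ in $\dom A\cap U$ with $c_\alpha\to z$ and $w_\alpha^*\in Ac_\alpha$. Monotone relatedness gives $\langle z-c_\alpha,z^*-w_\alpha^*\rangle\ge0$, and, for an arbitrary fixed $(a,a^*)\in\gra A$, monotonicity of $A$ gives $\langle c_\alpha-a,w_\alpha^*-a^*\rangle\ge0$. Expanding $\langle z-a,z^*-a^*\rangle$ by bilinearity through the four vectors $z-c_\alpha$, $c_\alpha-a$, $z^*-w_\alpha^*$, $w_\alpha^*-a^*$, these two nonnegative quantities are two of the resulting summands, so the target $\langle z-a,z^*-a^*\rangle\ge0$ reduces to showing that the sum $r_\alpha:=\langle z-c_\alpha,w_\alpha^*-a^*\rangle+\langle c_\alpha-a,z^*-w_\alpha^*\rangle$ of the two remaining cross terms has $\limsup_\alpha r_\alpha\ge0$ for a suitably chosen approximating net.

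This last point is the main obstacle. The dual net $(w_\alpha^*)$ -- the slopes of the linear relation $A$ as $c_\alpha$ approaches the boundary of $\dom A$ -- is in general unbounded (it must be, since a weak$^*$ cluster point of a bounded subnet would pair with $z$ in the closed graph and place $z$ in $\dom A$), so $r_\alpha$ need not converge and its growth must be bounded below. This is exactly where linearity of $\gra A$ alone is insufficient and the finer structure of $A$ enters: the admissible size of $w_\alpha^*$ against $\|c_\alpha-z\|$ is governed by the nonnegative quadratic form carried by the Fitzpatrick function $F_A$ of the linear relation (Fact~\ref{f:Fitz}), and making this precise is the substance of Simons' Theorem~46.1. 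An equivalent packaging, closer to the tools assembled above, first reduces (FPV) of $A$ to the maximal monotonicity of $A+N_C$ for every nonempty closed convex $C$ with $\dom A\cap\inte C\neq\varnothing$ (take $C=\overline U$ and use convexity of $\dom A$ and norm-convergence of convex combinations of graph points), then applies Fact~\ref{f:referee1} with $B:=N_C$ -- its qualification $\bigcup_{\lambda>0}\lambda[P_X(\dom F_A)-P_X(\dom F_{N_C})]=X$ being immediate from an interior point and Fact~\ref{f:Fitz} -- reducing everything to the inequality $F_{A+N_C}\ge\langle\cdot,\cdot\rangle$ on $X\times X^*$; but proving that inequality is the very same substantive step. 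All of the argument apart from it is routine.
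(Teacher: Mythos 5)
This statement is imported verbatim from Simons (\cite[Theorem~46.1]{Si2}); the paper supplies no proof of its own, so the only question is whether your argument actually establishes the result. It does not. Your treatment of the two peripheral cases is fine: for $z\notin\overline{\dom A}$ you correctly exploit $(\dom A)^{\bot}=A0$ to perturb $(b,b^*)$ by $(0,tx_0^*)$ and derive a contradiction, and for $z\in\dom A$ the two-sided segment argument (valid because $\gra A$ is a subspace, so small negative $s$ is allowed) does force $z^*-\bar z^*\in(\dom A)^{\bot}=A0$. But the entire content of the theorem sits in the case $z\in\overline{\dom A}\setminus\dom A$, and there you reduce the claim to $\limsup_\alpha r_\alpha\ge 0$ for the cross terms along an approximating net with necessarily unbounded slopes $(w^*_\alpha)$ --- and then concede that you cannot prove it. As you yourself observe, that estimate \emph{is} Simons' Theorem~46.1; a proof that defers the only nontrivial step to the theorem being proved is not a proof.

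The fallback ``packaging'' you propose is worse than incomplete in the context of this paper: it is circular. Reducing (FPV) of $A$ to the maximal monotonicity of $A+N_C$ via Fact~\ref{f:referee02a}, and then attacking $F_{A+N_C}\ge\langle\cdot,\cdot\rangle$ via Fact~\ref{f:referee1}, is exactly the special case $B=N_C$ of Theorem~\ref{PGV:1} --- and the proof of Theorem~\ref{PGV:1} invokes Fact~\ref{S:referee01} twice (to produce the nets $(a_n,a^*_n)$ and $(b_n,b^*_n)$). So within the logical architecture of this paper that route cannot be used to establish the Fact. Simons' actual argument rests on his separate theory of maximally monotone multifunctions with convex graph (a Hahn--Banach/minimax mechanism), none of which is available from the auxiliary results assembled in Section~\ref{s:aux}; if you want a self-contained proof you would need to import that machinery, not reshuffle the Fitzpatrick-function lemmas already in play.
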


\begin{fact}[Simons and Verona-Verona]
\emph{(See \cite[Theorem~44.1]{Si2} or \cite{VV1}.)}
\label{f:referee02a} Let $A:X\To X^*$ be maximally monotone. Suppose
that for every closed convex subset $C$ of $X$ with $\dom A \cap
\inte C\neq \varnothing$, the operator $A+N_C$ is maximally
monotone. Then $A$ is of type  (FPV).
\end{fact}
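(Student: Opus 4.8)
The plan is to check the defining implication of type (FPV) directly, arguing by contradiction. Fix an open convex set $U\subseteq X$ with $U\cap\dom A\neq\varnothing$, a point $x_0\in U$, and $x_0^*\in X^*$ such that $(x_0,x_0^*)$ is monotonically related to $\gra A\cap(U\times X^*)$, and suppose toward a contradiction that $(x_0,x_0^*)\notin\gra A$. The first move is to feed the hypothesis a good closed convex set, and the obvious candidate is $C:=\overline{U}$: since $U$ is nonempty open convex one has $\inte\overline{U}=U$, so $\dom A\cap\inte C=\dom A\cap U\neq\varnothing$, and the hypothesis gives that $A+N_C$ is maximally monotone.

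Next I would observe that $(x_0,x_0^*)\notin\gra(A+N_C)$: indeed $x_0\in U=\inte C$ forces $N_C(x_0)=\{0\}$, so membership in $\gra(A+N_C)$ would give $x_0^*\in Ax_0$, contrary to the assumption. Hence, by maximal monotonicity of $A+N_C$, there exists $(y,y^*)\in\gra(A+N_C)$ with $\scal{x_0-y}{x_0^*-y^*}<0$; write $y^*=a^*+n^*$ with $a^*\in Ay$, $n^*\in N_C(y)$, and $y\in\dom A\cap\overline{U}$. Since $x_0\in C$, we have $\scal{x_0-y}{n^*}\le 0$, whence $\scal{x_0-y}{x_0^*-a^*}\le\scal{x_0-y}{x_0^*-y^*}<0$. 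Thus $(x_0,x_0^*)$ fails to be monotonically related to $(y,a^*)\in\gra A$, and therefore $y\notin U$; that is, $y\in\bd U\cap\dom A$. At this point the whole argument has been reduced to a single assertion: under our hypotheses, monotone relatedness of $(x_0,x_0^*)$ to $\gra A\cap(U\times X^*)$ automatically propagates to $\gra A\cap(\overline{U}\times X^*)$, i.e.\ no point of $\dom A$ on the boundary of $U$ can witness a failure of monotone relatedness.

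This propagation step is the crux, and the point I expect to be the main obstacle; it is exactly here that the hypothesis must be invoked for closed convex sets other than $\overline{U}$. The natural approach is to approximate the offending boundary point $y$ from inside $U$: choose $u\in U\cap\dom A$, $u^*\in Au$, and $\varepsilon>0$ with $u+\varepsilon U_X\subseteq U$, and apply the hypothesis to closed convex sets such as $C':=\clconv\bigl((u+\varepsilon U_X)\cup\{x_0,y\}\bigr)$, which satisfy $u\in\dom A\cap\inte C'$ so that each $A+N_{C'}$ is maximally monotone; one then combines the monotonicity inequalities relating $(y,a^*)$ to points $(u',u'^*)\in\gra A\cap(U\times X^*)$ with a limiting argument in which $u'$ is driven toward $y$ along segments contained in $U$. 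The subtlety — and the reason the estimates have to be carried by the enlarged operators $A+N_{C'}$ rather than by $A$ alone — is that such segment approximants of $y$ need not lie in $\dom A$, so the normal-cone perturbations must be used to supply the graph elements near $y$. Once $\scal{x_0-y}{x_0^*-a^*}\ge 0$ has been established, it contradicts the strict inequality above, and the theorem follows. In carrying this out I would also verify the two standard facts used implicitly, namely $\inte\overline{U}=U$ for nonempty open convex $U$ and $N_C(x)=\{0\}$ for $x\in\inte C$.
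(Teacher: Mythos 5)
The paper offers no proof of this Fact---it is quoted from Simons [Theorem~44.1] and Verona--Verona---so your attempt can only be measured against the standard cited argument. The first half of your proposal is correct as far as it goes, but, as you yourself flag, it only reduces the theorem to the ``propagation'' claim, and that claim is where all the content lives: it is in fact \emph{equivalent} to the theorem (given the conclusion, monotonicity of $A$ yields monotone relatedness of $(x_0,x_0^*)$ to all of $\gra A$; conversely your $C=\overline{U}$ argument derives the theorem from propagation). Your sketch for this crux does not go through. The sets $C'=\clconv\bigl((u+\varepsilon U_X)\cup\{x_0,y\}\bigr)$ contain the offending boundary point $y$ itself, so $\gra(A+N_{C'})$ contains elements with first coordinate $y$, about which your monotone-relatedness hypothesis says nothing; verifying that $(x_0,x_0^*)$ is monotonically related to $\gra(A+N_{C'})$---which is what you would need before maximality of $A+N_{C'}$ can be exploited---therefore requires precisely the inequality $\scal{x_0-y}{x_0^*-a^*}\geq 0$ you are trying to prove, and the argument is circular. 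The alternative reading, a limit along segment points $u'\in[x_0,y)\subseteq U$, founders because such $u'$ need not lie in $\dom A$, and the normal-cone perturbations cannot ``supply graph elements'' there: $\dom(A+N_{C'})=\dom A\cap C'$, so $A+N_{C'}$ has no graph points at any $x$ that $A$ itself lacks.

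The repair is to abandon $C=\overline{U}$ and choose the auxiliary sets \emph{inside} $U$, so that boundary points of $U$ never enter $\dom(A+N_C)$. Fix $w\in U\cap\dom A$ and $\varepsilon>0$ with $w+\varepsilon B_X\subseteq U$. Given an arbitrary $(v,v^*)\in\gra A$, pick $t\in\left]0,1\right[$ so small that $x_t:=x_0+t(v-x_0)\in U$, and set $C:=\conv\bigl((w+\varepsilon B_X)\cup\{x_0,x_t\}\bigr)$; this set is closed (convex hull of a finite set with a closed bounded convex set), contained in the convex set $U$, and $w\in\dom A\cap\inte C$, so $A+N_C$ is maximally monotone by hypothesis. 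Every $(y,a^*+n^*)\in\gra(A+N_C)$ has $y\in U\cap\dom A$, so your assumption on $(x_0,x_0^*)$ together with $\scal{x_0-y}{n^*}\leq 0$ (as $x_0\in C$) shows that $(x_0,x_0^*)$ is monotonically related to $\gra(A+N_C)$; maximality then yields $x_0^*=a_0^*+n_0^*$ with $a_0^*\in Ax_0$ and $n_0^*\in N_C(x_0)$. Since $x_t\in C$ we get $\scal{x_t-x_0}{n_0^*}\leq 0$, i.e.\ $\scal{v-x_0}{n_0^*}\leq 0$, whence $\scal{x_0-v}{x_0^*-v^*}=\scal{x_0-v}{a_0^*-v^*}+\scal{x_0-v}{n_0^*}\geq 0$. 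As $(v,v^*)\in\gra A$ was arbitrary, maximality of $A$ gives $(x_0,x_0^*)\in\gra A$ directly---no propagation lemma and no contradiction argument are needed.
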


Next we present a useful sufficient condition for the sum problem to
have a positive resolution (see also \cite{Bor2}).

\begin{fact}[Voisei and Z\u{a}linescu]
\emph{(See  \cite[Corollary~4]{ZalVoi}.)}
\label{f:referee01}
Let $A, B:X\To X^*$ be  maximally monotone.  Assume that $^{ic}(\dom A)\neq\varnothing,
^{ic}(\dom B)\neq\varnothing$ and $0\in^{ic}\left[\dom A-\dom B\right]$.
Then $A+B$ is maximally monotone.
\end{fact}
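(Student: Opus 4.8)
The plan is to derive the result from Fact~\ref{f:referee1}, which reduces the maximal monotonicity of $A+B$ to two statements: (a) the set $S:=\bigcup_{\lambda>0}\lambda\big[P_X(\dom F_A)-P_X(\dom F_B)\big]$ is a closed linear subspace of $X$; and (b) $F_{A+B}\geq\scal{\cdot}{\cdot}$ on $X\times X^{*}$. Statement (a) is essentially bookkeeping about the hypotheses, while (b) carries the real weight.

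For (a) I would use the standard inclusions $\dom A\subseteq P_X(\dom F_A)\subseteq\clconv\dom A$ (the first immediate from Fact~\ref{f:Fitz}, the second from a separation argument) and their analogues for $B$, which give $\cone(\dom A-\dom B)\subseteq S\subseteq\cone\big(\clconv\dom A-\clconv\dom B\big)$. The qualification $0\in{}^{ic}(\dom A-\dom B)$ forces $L:=\aff(\dom A-\dom B)$ to be a closed subspace with $\cone(\dom A-\dom B)=L$, while ${}^{ic}(\dom A)\neq\varnothing$ and ${}^{ic}(\dom B)\neq\varnothing$ force $\aff\dom A$ and $\aff\dom B$ to be closed; hence $\clconv\dom A-\clconv\dom B\subseteq\aff\dom A-\aff\dom B=L$, and the squeeze yields $S=L$, a closed subspace.

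Statement (b) is the heart. Fix $(z,z^{*})\in X\times X^{*}$. If $(z,z^{*})$ is not monotonically related to $\gra(A+B)$ then $F_{A+B}(z,z^{*})>\scal{z}{z^{*}}$ automatically, so one may assume $(z,z^{*})$ monotonically related and must show $F_{A+B}(z,z^{*})\geq\scal{z}{z^{*}}$. A routine manipulation of the Fitzpatrick suprema gives $F_{A+B}(z,z^{*})\leq H(z,z^{*})$, where $H:=F_A\,\square_2\,F_B$ is the partial inf-convolution $H(x,x^{*})=\inf_{u^{*}+v^{*}=x^{*}}\big[F_A(x,u^{*})+F_B(x,v^{*})\big]$, a jointly convex function with $H\geq\scal{\cdot}{\cdot}$; the needed \emph{lower} bound on $F_{A+B}$, by contrast, must be produced by a duality argument. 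I would run a Fenchel--Rockafellar duality over $X$ assembled from $F_A$ and $F_B$, arranged so that the governing difference of domains is exactly $\dom A-\dom B$, and invoke the Attouch--Br\'ezis-type strong-duality theorem whose qualification is precisely $0\in{}^{ic}(\dom A-\dom B)$ together with the closedness of the affine hulls secured in (a). This delivers that $H$ is proper and lower semicontinuous with exact inf-convolution, and then, passing to conjugates and exploiting the closed subspace $S$, that $F_{A+B}(z,z^{*})\geq\scal{z}{z^{*}}$, with Fact~\ref{f:Fitz} used to read off membership in $\gra A$ and $\gra B$ from equality in the Fitzpatrick inequalities. With (a) and (b) established, Fact~\ref{f:referee1} gives that $A+B$ is maximally monotone.

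The main obstacle is (b), and within it the non-reflexivity of $X$: one is forced through conjugate duality on $X\times X^{*}$, whose dual $X^{*}\times X^{**}$ is strictly larger, so the comfortable reflexive-space route (perturb by the duality mapping, then use a surjectivity theorem) is unavailable. Choosing the convex functions and the ambient topology so that the ${}^{ic}$ constraint qualification genuinely applies, and harvesting true exactness of the inf-convolution rather than merely a zero duality gap, is the delicate point; this is exactly why the hypotheses are stated in terms of the intrinsic core — closed affine hulls plus a relative-interior condition — rather than a bare nonempty intersection of domains, and it is where essentially all the work lies.
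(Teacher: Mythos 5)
First, a point of reference: the paper does not prove this statement at all --- it is imported verbatim as \cite[Corollary~4]{ZalVoi} and used as a black box (in Lemma~\ref{rcf:01} and Corollary~\ref{domain:L1}). So your proposal can only be judged on its own terms. Your part (a) is essentially correct: the sandwich $\dom A\subseteq P_X(\dom F_A)\subseteq\clconv\dom A$ is standard, and the squeeze identifies $\bigcup_{\lambda>0}\lambda\left[P_X(\dom F_A)-P_X(\dom F_B)\right]$ with the closed subspace $\aff(\dom A-\dom B)$. Your top-level reduction through Fact~\ref{f:referee1} is also the same architecture the paper itself uses to prove Theorem~\ref{PGV:1}.

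The genuine gap is in part (b), and it is structural. As sketched, your hard step consumes only two inputs: the closed-subspace transversality condition from (a), and generic Fenchel/Attouch--Br\'ezis duality applied to $F_A$ and $F_B$; the hypotheses $^{ic}(\dom A)\neq\varnothing$ and $^{ic}(\dom B)\neq\varnothing$ do no essential work in (b). But under the transversality condition alone, the inequality $F_{A+B}\geq\scal{\cdot}{\cdot}$ is \emph{equivalent} to the maximal monotonicity of $A+B$ (one direction is Fact~\ref{f:referee1}, the other is Fact~\ref{f:Fitz}), and by Proposition~\ref{ProC:V1} and Remark~\ref{rem42} that assertion in full generality contains the sum problem, which the paper emphasizes is open. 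So an argument of the shape you describe must be missing an ingredient. Concretely, here is where it breaks: Attouch--Br\'ezis gives exactness of the partial inf-convolution $H$ and the conjugate formula $H^*(w^*,w^{**})=\min_{u^*+v^*=w^*}\left[F_A^*(u^*,w^{**})+F_B^*(v^*,w^{**})\right]$ on $X^*\times X^{**}$, but the only way to ``pass to conjugates'' and recover a \emph{lower} bound on $F_{A+B}$ is through the inequalities $F_A^*\geq\scal{\cdot}{\cdot}$ and $F_B^*\geq\scal{\cdot}{\cdot}$ on all of $X^*\times X^{**}$. Those inequalities characterize operators of type (NI)/(D) and fail for general maximally monotone operators on nonreflexive spaces --- for instance for Gossez-type skew operators on $\ell^1$, whose domain is the whole space and to which the present Fact therefore applies. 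What the duality actually yields is that $H$ is a representative function of $A+B$ (it dominates the duality product and equals it on the graph), and representability does not imply maximality; bridging that gap is precisely the content of the sum problem. The proofs that do work (Voisei--Z\u{a}linescu's, and the paper's own proof of Theorem~\ref{PGV:1} for its special case) supplement the duality with non-duality ingredients --- local boundedness of $B$ on the interior of its domain (Fact~\ref{extlem}), normal-cone truncations $N_{[0,z]}$, and (FPV)-type limiting arguments --- and that is exactly where the individual conditions $^{ic}(\dom A)\neq\varnothing$ and $^{ic}(\dom B)\neq\varnothing$ earn their keep. You correctly name nonreflexivity as the obstacle, but the claim that conjugate duality then ``delivers'' $F_{A+B}\geq\scal{\cdot}{\cdot}$ is unsupported and, as stated, proves too much.
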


\begin{fact}\emph{(See \cite[Lemma~2.9]{BWY9}.)}\label{rcf:001}
Let $A:X\To X^*$ be a maximally monotone linear relation,
and let $z\in X\cap (A0)^\bot$.
Then $z\in\overline{\dom A}$.
\end{fact}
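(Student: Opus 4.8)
The plan is to proceed by contradiction, pitting a Hahn--Banach separation against the maximality of $A$. Two elementary observations about a maximally monotone linear relation $A$ will drive the argument. First, $\gra A$ is a linear subspace, so it contains $(0,0)$; applying monotonicity to an arbitrary pair $(a,a^*)\in\gra A$ and the pair $(0,0)$ gives $\scal{a}{a^*}\geq 0$ for every $(a,a^*)\in\gra A$. Second, $\dom A = P_X(\gra A)$ is a linear subspace of $X$ (the projection of a subspace), hence $\overline{\dom A}$ is a closed linear subspace.

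Now suppose, towards a contradiction, that $z\notin\overline{\dom A}$. Since $\overline{\dom A}$ is a closed linear subspace not containing $z$, the Hahn--Banach separation theorem provides $x^*\in X^*$ with $\scal{z}{x^*}\neq 0$ and $\scal{d}{x^*}=0$ for all $d\in\dom A$ --- indeed any functional separating $z$ from the subspace $\overline{\dom A}$ must be bounded above on that subspace and hence vanish identically on it --- and after rescaling we may assume $\scal{z}{x^*}=1$. I would then verify that $(0,x^*)$ is monotonically related to $\gra A$: for every $(a,a^*)\in\gra A$,
\begin{equation*}
\scal{0-a}{x^*-a^*} = -\scal{a}{x^*}+\scal{a}{a^*} = \scal{a}{a^*}\geq 0,
\end{equation*}
where the second equality uses $a\in\dom A$ together with the choice of $x^*$, and the inequality is the first observation above.

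By the maximal monotonicity of $A$, it follows that $(0,x^*)\in\gra A$, i.e.\ $x^*\in A0$. But then the hypothesis $z\in(A0)^\bot$ forces $\scal{z}{x^*}=0$, contradicting $\scal{z}{x^*}=1$. Hence $z\in\overline{\dom A}$, as claimed. I do not expect a serious obstacle here; the proof is genuinely short, and the only point demanding a little care is the separation step, where one must exploit that the separating functional is bounded above on the subspace $\dom A$ and therefore annihilates it --- precisely the property that makes $(0,x^*)$ monotonically related to $\gra A$.
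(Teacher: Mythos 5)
Your proof is correct: the separation step produces a functional $x^*$ annihilating the closed subspace $\overline{\dom A}$ with $\scal{z}{x^*}=1$, the computation showing $(0,x^*)$ is monotonically related to $\gra A$ is valid, and maximality together with $z\in (A0)^\bot$ gives the contradiction. The paper itself states this as a Fact imported from \cite{BWY9} without reproducing a proof, and your argument is essentially the standard one behind that lemma --- it amounts to proving $(\dom A)^\bot\subseteq A0$ and then concluding $z\in (A0)^\bot\cap X\subseteq ((\dom A)^\bot)^\bot\cap X=\overline{\dom A}$.
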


\begin{fact}\label{Ll:l1}\emph{(See \cite[Lemma~2.5]{BWY4}.)}
Let $C$ be  a nonempty closed convex
subset of $X$ such that $\inte C\neq \varnothing$.
Let $c_0\in \inte C$ and suppose that $z\in X\smallsetminus C$.
Then there exists
$\lambda\in\left]0,1\right[$ such
that $\lambda c_0+(1-\lambda)z\in\bd C$.
\end{fact}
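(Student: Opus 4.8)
The plan is to travel along the closed segment from $z$ to $c_0$ and to record the first parameter at which the moving point enters $C$; that point of first entry will be the desired boundary point. Parametrize the segment by $x(\lambda):=\lambda c_0+(1-\lambda)z$, $\lambda\in[0,1]$, so that $x(1)=c_0\in\inte C$ while $x(0)=z\notin C$, and put
\[
S:=\menge{\lambda\in[0,1]}{x(\lambda)\in C},\qquad \lambda_0:=\inf S .
\]
Since $1\in S$, the set $S$ is nonempty, $\lambda_0$ is well defined, and the claim will be that $\lambda:=\lambda_0$ satisfies $\lambda_0\in\left]0,1\right[$ and $x(\lambda_0)\in\bd C$.

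First I would establish the two strict inequalities. Note that $z\neq c_0$, since $c_0\in C$ but $z\notin C$. As $c_0\in\inte C$, pick $r\in\left]0,\|z-c_0\|\right[$ with $c_0+rB_X\subseteq C$; then, since $x(\lambda)=c_0+(1-\lambda)(z-c_0)$, we have $x(\lambda)\in c_0+rB_X\subseteq C$ whenever $1-\lambda\le r/\|z-c_0\|$, so $\lambda_0\le 1-r/\|z-c_0\|<1$. On the other hand, $C$ is closed and $z\notin C$, so $\di(z,C)>0$; and because $\|x(\lambda)-z\|=\lambda\,\|z-c_0\|$, the point $x(\lambda)$ lies outside $C$ whenever $\lambda<\di(z,C)/\|z-c_0\|$, so $\lambda_0\ge\di(z,C)/\|z-c_0\|>0$.

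Next I would show $x(\lambda_0)\in\bd C$. The map $\lambda\mapsto x(\lambda)$ is continuous and $C$ is closed, so $S=x^{-1}(C)$ is closed; hence $\lambda_0\in S$, i.e., $x(\lambda_0)\in C$. If we had $x(\lambda_0)\in\inte C$, say $x(\lambda_0)+\rho B_X\subseteq C$ with $\rho>0$, then by continuity $x(\lambda)\in x(\lambda_0)+\rho B_X\subseteq C$ for all $\lambda$ in some set $\left]\lambda_0-\delta,\lambda_0\right]\cap[0,1]$ with $\delta>0$; as $\lambda_0>0$, this set contains parameters strictly below $\lambda_0$, contradicting $\lambda_0=\inf S$. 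Therefore $x(\lambda_0)\in C\setminus\inte C=\bd C$ (the equality because $C$ is closed), which completes the argument with $\lambda:=\lambda_0\in\left]0,1\right[$.

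The argument is elementary; the only delicate point is securing $0<\lambda_0$ and $\lambda_0<1$ simultaneously — the former from closedness of $C$ together with $z\notin C$, the latter from $c_0\in\inte C$. Convexity of $C$ is in fact not needed for the statement, although it does additionally force $S=[\lambda_0,1]$.
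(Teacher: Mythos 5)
Your proof is correct. The paper states this result as a Fact imported from \cite[Lemma~2.5]{BWY4} without reproducing a proof, and your first-entry-time argument along the segment (taking $\lambda_0:=\inf S$, using closedness of $C$ to get $\lambda_0>0$ and $\lambda_0\in S$, and $c_0\in\inte C$ to get $\lambda_0<1$) is the standard one; your remark that convexity of $C$ is not actually needed is also accurate.
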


\begin{fact}[Boundedness below]\emph{(See  \cite[Fact~4.1]{BY1}.)}\label{extlem}
Let $A:X\rightrightarrows X^*$ be monotone and $x\in\inte\dom A$. Then there exist $\delta>0$ and  $M>0$ such that
 $x+\delta B_X\subseteq\dom A$ and $\sup_{a\in x+\delta B_X}\|Aa\|\leq M$.
Assume that $(z,z^*)$ is monotonically related to $\gra A$. Then
\begin{align}
\langle z-x, z^*\rangle
\geq \delta\|z^*\|-(\|z-x\|+\delta) M.
\end{align}
\end{fact}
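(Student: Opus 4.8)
The plan is to treat the two assertions separately. The first --- the existence of $\delta>0$ and $M>0$ with $x+\delta B_X\subseteq\dom A$ and $\sup_{a\in x+\delta B_X}\|Aa\|\leq M$ --- is exactly the classical statement that a monotone operator is locally bounded at every point of the interior of its domain (indeed of $\inte(\conv\dom A)$); I would simply invoke this known fact (see, e.g., \cite{BorVan} or \cite{ph}). Applying it at the point $x\in\inte\dom A$ produces the desired radius $\delta$ and bound $M$.

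For the displayed estimate, I would argue as follows. Fix $(z,z^*)$ monotonically related to $\gra A$ and let $\varepsilon>0$. By definition of the dual norm there is $b\in X$ with $\|b\|=1$ and $\scal{b}{z^*}\geq\|z^*\|-\varepsilon$; set $a:=x+\delta b$. Then $a\in x+\delta B_X\subseteq\dom A$, so we may choose $a^*\in Aa$, and by the first part $\|a^*\|\leq M$. Since $(z,z^*)$ is monotonically related to $\gra A$ we have $\scal{z-a}{z^*-a^*}\geq0$, which, on writing $z-a=z-x-\delta b$ and rearranging, becomes
\begin{equation*}
\scal{z-x}{z^*}\;\geq\;\delta\scal{b}{z^*}+\scal{z-x}{a^*}-\delta\scal{b}{a^*}.
\end{equation*}
Estimating $\scal{z-x}{a^*}\geq-\|z-x\|\,M$, $-\delta\scal{b}{a^*}\geq-\delta\|b\|\,M=-\delta M$, and $\delta\scal{b}{z^*}\geq\delta\|z^*\|-\delta\varepsilon$ yields
\begin{equation*}
\scal{z-x}{z^*}\;\geq\;\delta\|z^*\|-(\|z-x\|+\delta)M-\delta\varepsilon,
\end{equation*}
and letting $\varepsilon\downarrow0$ gives the claim.

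I do not expect a real obstacle: the only substantive ingredient is the classical local boundedness theorem for monotone operators, and once $\delta$ and $M$ are in hand the estimate is a one-line expansion of the monotone-relatedness inequality at the well-chosen test point $a=x+\delta b$. The only point requiring minor care is that in a general Banach space the supremum defining $\|z^*\|$ need not be attained, which is why I pass to an $\varepsilon$-approximate maximizer $b$ and then let $\varepsilon\downarrow0$, rather than selecting a norming functional directly.
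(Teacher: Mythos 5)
This statement is a Fact imported verbatim from \cite[Fact~4.1]{BY1}, so the paper contains no proof of it to compare against; your argument is correct and is essentially the standard one used in that source. The first assertion is exactly Rockafellar's local boundedness theorem for monotone operators at points of $\inte\dom A$, and the displayed inequality then follows, as you show, by testing monotone relatedness against $a=x+\delta b$ with $\|b\|=1$ and passing to the supremum over such $b$ (your $\varepsilon$-approximate maximizer correctly handles the possible non-attainment of the dual norm).
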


Before we turn to our main result, we need the following technical
lemma.

\begin{lemma}\label{rcf:01}
Let $A:X\To X^*$ be a  monotone linear relation, and let
$B:X\rightrightarrows X^*$ be a maximally monotone operator. Suppose
that $\dom A \cap \inte \dom B\neq \varnothing$. Suppose also that
$(z,z^*)\in X\times X^*$ is monotonically related to $\gra (A+ B)$,
and that $z\in\dom A$. Then  $z\in  \dom B$.
\end{lemma}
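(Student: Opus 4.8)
Suppose for contradiction that $z\notin\dom B$. Fix $y_0\in\dom A\cap\inte\dom B$. Since $B$ is maximally monotone with $\inte\dom B\neq\varnothing$, Fact~\ref{f:referee02c} tells us $\overline{\dom B}$ is convex with $\inte\dom B=\inte\overline{\dom B}$, so by Fact~\ref{Ll:l1} there exists $\lambda\in\left]0,1\right[$ with $b_0:=\lambda y_0+(1-\lambda)z\in\bd(\overline{\dom B})$. The idea is to exploit the monotone relationship at points of the segment $\left[y_0,z\right]$ close to $b_0$, together with the boundedness-below estimate (Fact~\ref{extlem}) applied at the interior point $y_0\in\inte\dom B$, to force a contradiction. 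Specifically, for each $(y,y^*)\in\gra A$ with $y\in\dom B$ and each $b^*\in By$, monotone relatedness of $(z,z^*)$ to $\gra(A+B)$ gives $\langle z-y,z^*-y^*-b^*\rangle\geq0$; the plan is to make a careful choice of test points $y$ lying on or near the segment $\left[y_0,b_0\right]\subseteq\inte\dom B$ (so that $y\in\dom A\cap\dom B$ is available via the linearity of $\gra A$ — here we use that $y_0,z\in\dom A$ and $\gra A$ is a linear subspace, hence affine combinations stay in $\dom A$) and drive $y\to b_0$.

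The technical heart is to control the $A$-part. Since $A$ is merely a monotone linear relation (not assumed maximal), I cannot invoke an (FPV)-type argument for $A$ directly; instead I would use the linear structure. Writing $y_t:=ty_0+(1-t)z\in\dom A$ for $t\in\zeroun$, pick $y_t^*\in Ay_t$; by linearity one may take $y_t^*=tu_0+(1-t)w$ where $u_0\in Ay_0$ and $w\in Az$ (using $z\in\dom A$). Then the monotone inequality $\langle z-y_t,z^*-y_t^*-b_t^*\rangle\geq0$ for $b_t^*\in By_t$ becomes, after dividing by $t>0$ and simplifying the affine dependence on $t$, an inequality of the form $\langle z-y_0,z^*-u_0\rangle - \langle z-y_0,b_t^*\rangle \geq (\text{bounded terms})$. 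The point is that $\langle z-y_0,b_t^*\rangle$ must then stay bounded above as $t\downarrow$ along the values where $y_t\in\inte\dom B$; but as $y_t\to b_0\in\bd\overline{\dom B}$, Fact~\ref{extlem} (applied with $x=y_0$) forces $\|b_t^*\|$ or the relevant pairing to blow up in a direction pointing out of $\dom B$, i.e. $\langle z-b_0, b_t^*\rangle\to+\infty$, which combined with $z-y_0=(z-b_0)/(1-\lambda)$ (same ray) yields the contradiction.

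I expect the main obstacle to be precisely this last step: extracting the blow-up of the normal-type component of $b_t^*$ at the boundary point $b_0$ and verifying that it has the right sign relative to $z-y_0$. The clean way is to note $z-y_0$ and $z-b_0$ are positive multiples of one another (since $b_0$ lies on the segment $\left[y_0,z\right]$), so the sign bookkeeping reduces to a single inequality; then apply Fact~\ref{extlem} along a sequence $t_n\downarrow t_\infty$ with $y_{t_n}\in\inte\dom B$ and $y_{t_n}\to b_0$, using that the supremum of $\|Bc\|$ over a neighbourhood of $y_0$ is finite to absorb the $y_0$-side terms, while $\delta\|b_{t_n}^*\|$ on the $y_{t_n}$-side must be dominated — impossible unless $\|b_{t_n}^*\|$ stays bounded, which in turn (by maximal monotonicity of $B$ and a standard weak$^*$ limit argument) would put $b_0\in\dom B$, contradicting $b_0\in\bd\overline{\dom B}$ once we also rule out $b_0\in\inte\dom B$. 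A secondary technical point is ensuring we only ever test with $y_t$ for which $By_t\neq\varnothing$; this is guaranteed on the half-open segment $\left[y_0,b_0\right[\subseteq\inte\dom B$, which suffices to pass to the limit.
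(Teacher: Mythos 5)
There is a genuine gap at the final step, and it is fatal to the strategy. Your argument correctly shows that, for test points $y_t$ on the segment with $y_t\in\inte\dom B$, monotone relatedness bounds $\langle z-y_0,b_t^*\rangle$ from above, and Fact~\ref{extlem} then forces $\sup_t\|b_t^*\|<\infty$; a weak$^*$ cluster point $b_\infty^*$ then yields $(b_0,b_\infty^*)\in\gra B$, i.e.\ $b_0\in\dom B$. But this is \emph{not} a contradiction: a maximally monotone operator can perfectly well have domain points lying on $\bd\,\overline{\dom B}$ (take $B=N_C$ for a closed convex $C$ with nonempty interior; then $\dom B=C$ contains all of $\bd C$). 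The blow-up you anticipate, $\langle z-b_0,b_t^*\rangle\to+\infty$, need never occur, because the monotone-relatedness inequality holds for \emph{every} $b^*\in By_t$ and there may always be bounded selections (e.g.\ $0\in N_C(y_t)$ for all $y_t\in C$). So in the main case $z\notin\overline{\dom B}$ the argument terminates with the harmless conclusion $b_0\in\dom B$ and no contradiction. (Two smaller points: Fact~\ref{Ll:l1} requires $z\notin\overline{\dom B}$, so the sub-case $z\in\overline{\dom B}\setminus\dom B$ must be split off — there your limiting argument with $y_t\to z$ does succeed and gives $z\in\dom B$ directly; also ruling out $b_0\in\inte\dom B$ does not rule out $b_0\in\dom B$.)

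The paper's proof supplies exactly the ingredient your plan is missing. It forms the auxiliary operator $B+N_{[0,z]}$ (with the segment from a base point of $\dom A\cap\inte\dom B$, normalized to $0$, to $z$), proves it is maximally monotone via the Voisei--Z\u{a}linescu criterion (Fact~\ref{f:referee01}), and then uses $z\notin\dom(B+N_{[0,z]})$ to produce a point $\lambda z\in[0,z]\cap\dom B$ with $\lambda<1$ and functionals $x^*\in N_{[0,z]}(\lambda z)$, $y^*\in B(\lambda z)$ satisfying the \emph{strict} inequality $\langle z-\lambda z,z^*-x^*-y^*\rangle<0$. Since $\langle z-\lambda z,x^*\rangle\leq0$, this gives $\langle z,z^*-y^*\rangle<0$; on the other hand, monotone relatedness of $(z,z^*)$ to $\gra(A+B)$ tested at $\lambda z\in\dom A\cap\dom B$, together with $\langle z,a^*\rangle\geq0$ for $a^*\in A(\lambda z)$ (from linearity and monotonicity of $A$), gives $\langle z,z^*-y^*\rangle\geq0$. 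The contradiction thus comes from the strict violation guaranteed by maximal monotonicity of the perturbed operator, not from any norm blow-up at the boundary; without that mechanism (or an equivalent one) your proof cannot be completed.
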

\begin{proof}
We can and do suppose that $(0,0)\in\gra A\cap\gra B$ and $0\in\dom A \cap \inte \dom B$.
Suppose to the contrary that $z\notin\dom B$. Then we have $z\neq0$.
We claim that
\begin{align}
N_{\left[0,z\right]}+B \quad\text{is maximally monotone}.\label{Lrtg:1}
\end{align}
Since $z\neq0$, then we have $\tfrac{1}{2}z\in{^{ic}(\dom N_{\left[0,z\right]})}$. Clearly,
$^{ic}(\dom B)\neq\varnothing$ and $0\in{^{ic}\left[\dom A-\dom B\right]}$.
By Fact~\ref{f:referee01},
$N_{\left[0,z\right]}+B$ is maximally monotone and hence \eqref{Lrtg:1} holds.
Since $(z, z^*)\notin\gra (N_{\left[0,z\right]}+B)$, there exist $\lambda \in\left[0,1\right]$ and $x^*, y^*\in X^*$ such that $(\lambda z, x^*)\in\gra N_{\left[0,z\right]}$, $(\lambda z, y^*)\in\gra B$ and
\begin{align}
\langle z-\lambda z, z^*-x^*-y^*\rangle<0.\label{Lrtg:2}
\end{align}

Since $(\lambda z, x^*)\in\gra B$ and $z\notin\dom B$, $\lambda<1$. Then by
\eqref{Lrtg:2},
\begin{align}
\langle  z, -x^*\rangle+\langle  z, z^*-y^*\rangle=\langle  z, z^*-x^*-y^*\rangle<0.
\label{Lrtg:3}
\end{align}
Since $(\lambda z, x^*)\in\gra N_{\left[0,z\right]}$, we have $\langle z-\lambda z, x^*\rangle\leq0$. Then  $\langle z, -x^*\rangle\geq0$. Thus
\eqref{Lrtg:3} implies that
\begin{align}
\langle  z, z^*-y^*\rangle<0.
\label{Lrtg:4}
\end{align}
Let $a^*\in A(\lambda z)$. By the assumption, we have
\begin{align*}
\langle  z-\lambda z, z^*- a^*-y^*\rangle\geq0.
\end{align*}
Then we have $\langle z, z^*- a^*-y^*\rangle\geq0$ and hence
\begin{align}
\langle z, z^*-y^*\rangle\geq\langle z, a^*\rangle.\label{Lrtg:5}
\end{align}
Now we show that
\begin{align}
\langle z, a^*\rangle\geq0.\label{Lrtg:6}
\end{align}
We consider two cases.

\emph{Case 1}: $\lambda=0$. Then $a^*\in A0$. Since $ z\in\dom A$
and $A$ is monotone, \cite[Proposition~5.1(i)]{BBWY5} implies that
$\langle z, a^*\rangle=0$. Hence \eqref{Lrtg:5} holds.

\emph{Case 2}: $\lambda\neq0$. Since $(\lambda z, a^*)\in \gra A$,
$\langle \lambda z, a^*\rangle\geq0$ and hence $\langle  z,
a^*\rangle\geq0$. Hence \eqref{Lrtg:6} holds.

Combining \eqref{Lrtg:5} and \eqref{Lrtg:6},
\begin{align*}
\langle z, z^*-y^*\rangle\geq0,\quad\text{which contradicts \eqref{Lrtg:4}}.
\end{align*}
Hence $z\in\dom B$.
\end{proof}

\begin{remark}
Lemma~\ref{rcf:01} generalizes \cite[Lemma~2.10]{BWY9} in which $B$
is assumed to be a convex subdifferential.
\end{remark}

 We now come to our central result.

\section{Main Result}
\label{s:main}

The proof of Theorem~\ref{PGV:1} in part follows that of
\cite[Theorem~3.1]{Yao2}.

\begin{theorem}[Linear sum theorem]\label{PGV:1}
Let $A:X\To X^*$ be a maximally monotone linear relation, and let
$B: X\rightrightarrows X^*$ be maximally monotone. Suppose  that
$\dom A\cap\inte\dom B\neq\varnothing$.  Then $A+B$ is maximally
monotone.
\end{theorem}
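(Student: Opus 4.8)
The plan is to reduce to the Fitzpatrick inequality via Fact~\ref{f:referee1}. Since $\gra A\subseteq\dom F_A$ and $\gra B\subseteq\dom F_B$, we have $P_X(\dom F_A)\supseteq\dom A$ and $P_X(\dom F_B)\supseteq\dom B$; after a translation we may assume $0\in\dom A\cap\inte\dom B$, so $\bigcup_{\lambda>0}\lambda\big[P_X(\dom F_A)-P_X(\dom F_B)\big]\supseteq\bigcup_{\lambda>0}\lambda\big[-\inte\dom B\big]=X$, a closed subspace. Thus it suffices to prove $F_{A+B}(z,z^*)\geq\scal{z}{z^*}$ for every $(z,z^*)\in\ZZZ$. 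Two cases are free: if $(z,z^*)$ is not monotonically related to $\gra(A+B)$, some $(y,v^*)\in\gra(A+B)$ has $\scal{z-y}{z^*-v^*}<0$ and then $F_{A+B}(z,z^*)\geq\scal{z}{v^*}+\scal{y}{z^*}-\scal{y}{v^*}=\scal{z}{z^*}-\scal{z-y}{z^*-v^*}>\scal{z}{z^*}$; and if $z\in\dom(A+B)$, taking $y=z$ in the defining supremum gives $F_{A+B}(z,z^*)\geq\scal{z}{z^*}$ directly. Hence the whole matter reduces to showing that if $(z,z^*)$ is monotonically related to $\gra(A+B)$ then $z\in\dom(A+B)$; translating $A$ and $B$ we may also assume $(0,0)\in\gra A\cap\gra B$ and $0\in\dom A\cap\inte\dom B$.

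The first step is to place $z$ in $\overline{\dom A}$. Testing monotone-relatedness against the pairs $(0,a^*)\in\gra(A+B)$, $a^*\in A0$ (using $0\in B0$), gives $\scal{z}{z^*-a^*}\geq0$ for every $a^*\in A0$; since $A$ is a linear relation with $(0,0)\in\gra A$, the set $A0$ is a linear subspace, so this forces $\scal{z}{a^*}=0$ for all $a^*\in A0$, i.e.\ $z\in(A0)^{\bot}$. Fact~\ref{rcf:001} then yields $z\in\overline{\dom A}$.

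The crux --- and the step I expect to be the main obstacle --- is to upgrade $z\in\overline{\dom A}$ to $z\in\dom A$. Here I would use that $A$ is of type (FPV) (Fact~\ref{S:referee01}), that $\overline{\dom B}$ is closed and convex with nonempty interior (Fact~\ref{f:referee02c}), and that $B$ is uniformly bounded near the interior point $0$ (Fact~\ref{extlem}), together with the segment lemma (Fact~\ref{Ll:l1}). The mechanism I have in mind: run the segment $[0,z]$, locate where it meets $\bd\overline{\dom B}$ via Fact~\ref{Ll:l1} and pick a supporting functional there, and feed the resulting open half-space (or a suitable open convex neighbourhood of $z$ meeting $\dom A$) into the (FPV) property of $A$ --- the point being that on this set the values of $B$ are controlled by Fact~\ref{extlem}, so the monotone-relatedness of $(z,z^*)$ against $\gra(A+B)$ upgrades to monotone-relatedness against $\gra A$ restricted to the set, whence (FPV) of $A$ forces $(z,w^*)\in\gra A$ for an appropriate $w^*$, i.e.\ $z\in\dom A$. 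Alternatively one can produce a net in $\dom A\cap\dom B$ converging to $z$ along which a selection of $A$ stays norm-bounded, and pass to a weak$^*$ cluster point using the closedness of $\gra A$ in the strong$\times$weak$^*$ topology. The delicate bookkeeping here --- controlling the $B$-contribution while keeping the approximants inside $\dom B$ and extracting the norm bound --- is where the real work lies.

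Once $z\in\dom A$ is established, Lemma~\ref{rcf:01} immediately gives $z\in\dom B$, hence $z\in\dom(A+B)$, which is exactly the case already dispatched above; Fact~\ref{f:referee1} then gives that $A+B$ is maximally monotone. Finally, the ``moreover'' assertion follows by a bootstrap: for any closed convex $C$ with $\dom(A+B)\cap\inte C\neq\varnothing$ one checks (via Fact~\ref{f:referee01}) that $B+N_C$ is maximally monotone with $\dom A\cap\inte\dom(B+N_C)\neq\varnothing$, so Theorem~\ref{PGV:1} applied to the pair $A,\,B+N_C$ shows $(A+B)+N_C=A+(B+N_C)$ is maximally monotone, and Fact~\ref{f:referee02a} then yields that $A+B$ is of type (FPV).
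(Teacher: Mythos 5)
Your reduction to the Fitzpatrick inequality, the two ``free'' cases, and the step $z\in(A0)^{\bot}\subseteq\overline{\dom A}$ all match the paper. The genuine gap is at what you yourself flag as the crux: upgrading $z\in\overline{\dom A}$ to $z\in\dom A$. This step cannot be carried out, and the paper's proof in fact establishes the opposite: under the quantitative contradiction hypothesis $F_{A+B}(z,z^*)+\lambda<\scal{z}{z^*}$ one shows $z\notin\dom A$ (precisely via Lemma~\ref{rcf:01}: if $z$ were in $\dom A$ then $z\in\dom B$ and the inequality would hold trivially), so the entire argument lives in the regime $z\in\overline{\dom A}\setminus\dom A$. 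Neither of your proposed mechanisms closes the gap. To invoke the (FPV) property of $A$ at $z$ you must first exhibit a candidate $w^*$ such that $(z,w^*)$ is monotonically related to $\gra A\cap(U\times X^*)$, and extracting such a $w^*$ from $\scal{z-a}{z^*-a^*-b^*}\geq 0$ requires controlling the $B$-selections $b^*$ as $a\to z$; but a priori $z$ need not even lie in $\overline{\dom B}$ (proving $z\in\overline{\dom B}$ is one of the hardest parts of the paper's proof), and even granting that, $z$ may sit on $\bd\overline{\dom B}$ where $B$ is unbounded, so Fact~\ref{extlem} does not apply at $z$. Your alternative --- a norm-bounded selection $a_n^*\in Aa_n$ with $a_n\to z$ and a weak$^*$ cluster point --- is circular: when $z\in\overline{\dom A}\setminus\dom A$ no such bounded selection can exist, since its weak$^*$ limit would be monotonically related to $\gra A$ and hence would place $z$ in $\dom A$.

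The paper's resolution is structurally different and never passes through $z\in\dom(A+B)$. Keeping the quantitative gap $\lambda>0$ (which your reduction to ``monotonically related $\Rightarrow z\in\dom(A+B)$'' discards), it: (i) proves $z\in\overline{\dom B}$ by contradiction, using the maximal monotonicity of $B+N_{[0,a_n]}$ for (FPV)-approximants $a_n\to z$ in $\gra A$ and a two-case analysis of the resulting selections $w_n^*\in B(\beta_n a_n)$ (the unbounded case handled by Fact~\ref{extlem} after normalizing); (ii) deduces $tz\in\inte\dom B$ for every $t\in\zeroun$ and combines the (FPV) property of $A$ near $tz$ with the local boundedness of $B$ at $tz$ to obtain $F_{A+B}(tz,tz^*)\geq t^2\scal{z}{z^*}$; (iii) uses the convexity of $F_{A+B}$ together with $F_{A+B}(0,0)=0$ to let $t\to 1^-$ and contradict the gap. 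Only after maximal monotonicity is established does one learn a posteriori that monotonically related points lie in $\gra(A+B)$. (Your final paragraph on the (FPV) property of the sum reproduces the paper's Corollary~\ref{domain:L1} correctly, but it is not part of the statement under review.)
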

\begin{proof} After translating the graphs if necessary, we can and do assume that
$0\in\dom A\cap\inte\dom B$ and that $(0,0)\in\gra A\cap\gra B$.
By Fact~\ref{f:Fitz}, $\dom A\subseteq P_X(\dom F_A)$ and
 $\dom B\subseteq P_X(\dom F_{B})$.
Hence,
\begin{align}\bigcup_{\lambda>0} \lambda
\big(P_X(\dom F_A)-P_X(\dom F_{B})\big)=X.\end{align}
Thus, by Fact~\ref{f:referee1}, it suffices to show that
\begin{equation} \label{e0:ourgoal}
F_{A+ B}(z,z^*)\geq \langle z,z^*\rangle,\quad \forall(z,z^*)\in X\times X^*.
\end{equation}
Take $(z,z^*)\in X\times X^*$.
Then
\begin{align}
&F_{A+B}(z,z^*)\nonumber\\
&=\sup_{\{x,x^*,y^*\}}\left[\langle x,z^*\rangle+\langle z,x^*\rangle-\langle x,x^*\rangle
+\langle z-x, y^*\rangle -\iota_{\gra A}(x,x^*)-\iota_{\gra
B}(x,y^*)\right].\label{see:1}
\end{align}
Assume to the contrary  that
\begin{align}
F_{A+B}(z,z^*)+\lambda<\langle z,z^*\rangle,\label{See:1a4}
\end{align}
where $\lambda>0$.

Now by \eqref{See:1a4},
\begin{align}
(z,z^*)\,\text{ is monotonically related to $\gra (A+B)$}.\label{SDF:47}\end{align}

We claim that \begin{align}z\notin \dom A.\label{LSD:1}\end{align}
 Indeed, if $z\in\dom A$, apply \eqref{SDF:47} and Lemma~\ref{rcf:01}
to get $z\in \dom B$. Thus $z\in\dom A\cap\dom B$ and hence
$F_{A+B}(z,z^*)\geq\langle z,z^*\rangle$ which contradicts
\eqref{See:1a4}. This establishes \eqref{LSD:1}.

By \eqref{See:1a4} and the assumption that $(0,0)\in\gra A\cap\gra B$,
we have
\begin{align*}
\sup\left[\langle 0,z ^*\rangle+\langle z,A0\rangle
-\langle 0, A0\rangle+\langle z, B0\rangle\right]
= \sup_{a^*\in A0,b^*\in B0}
\left[\langle z,a^*\rangle+\langle z, b^*\rangle\right]
<\langle z,z^*\rangle.
\end{align*}
Thus, since $A0$ is a linear subspace,
\begin{align}z\in X\cap (A0)^\bot.\label{ree:2}\end{align}
Then, by
Fact~\ref{rcf:001}, we have
\begin{align}z\in\overline{\dom A}.\label{ree:5}\end{align}
 Combining \eqref{LSD:1} and \eqref{ree:5}, we have
 \begin{align}
 z\in\overline{\dom A}\backslash{\dom A}.\label{LSD:2}\end{align}
Set
 \begin{align}
U_n:=  z+\tfrac{1}{n}U_X,\quad \forall n\in\NN\label{FD:1}.
\end{align}
By \eqref{LSD:2}, $(z,z^*)\notin\gra A$ and $U_n\cap\dom A\neq\varnothing$.
  Since $z\in U_n$ and $A$ is type of (FPV)
 by Fact~\ref{S:referee01},
there exists $(a_n, a^*_n)_{n\in\NN}$ in $\gra A$  with $a_n\in U_n,  n\in\NN$ such that
\begin{align}
\langle z,a^*_n\rangle+\langle a_n,z^*\rangle-\langle a_n, a^*_n\rangle
>\langle z,z^*\rangle,\quad\forall n\in\NN.\label{LSD:3}\end{align}
Then by \eqref{See:1a4} and \eqref{ree:2} we have
 \begin{align}a_n\neq 0,\quad\forall n\in\NN\quad\text{and}\quad
 a_n\longrightarrow z.\label{LSD:8}\end{align}

Now we claim that \begin{align}z\in\overline{\dom B}.\end{align}
Suppose to the contrary that $z\not\in\overline{\dom B}$.

As \eqref{LSD:8},  there exists $K_1\in\NN$ such that $a_n\notin\dom
B, \forall n\geq K_1$.  For convenience, we can and do suppose that
\begin{align}
a_n\notin\dom B,\quad\forall n\in\NN.\label{Lrtg:10}
\end{align}
Since $0\in\inte \dom B$,  by Fact~\ref{Ll:l1} and
Fact~\ref{f:referee02c},
 there exists $\delta\in\left]0,1\right[$
such that
\begin{align}\delta z \in\bd\overline{\dom B}.\label{LSD:4}\end{align}

Now  consider the operator: $B+N_{\left[0,a_n\right]}$. Following
the corresponding lines of the proof of Lemma~\ref{rcf:01} and by
\eqref{LSD:8}, $B+N_{\left[0,a_n\right]}$ is maximally monotone for
every $n\in\NN$.

Because $a_n\notin  \dom B$ by \eqref{Lrtg:10}, $a_n\not\in\dom
B\cap \left[0,a_n\right]=\dom (B+N_{\left[0,a_n\right]})$ for every
$n\in\NN$. Thus, $(a_n,z^*)\notin \gra (B+N_{\left[0,a_n\right]})$.
Thence
  there exist $\beta_n\in\left[0,1\right]$, $w^*_n\in B(\beta_n a_n)$ and $v^*_n\in
N_{\left[0,a_n\right]} (\beta_n a_n)$  such that
\begin{align}
\langle a_n-\beta_n a_n,z^*-w^*_n\rangle&<\langle a_n-\beta_n a_n,v^*_n\rangle\leq0,\quad\forall n\in\NN.
\label{LSD:7}
\end{align}

Since $\beta_n\in \left[0,1\right]$,  there is  a convergent
subsequence of $(\beta_n)_{n\in\NN}$, which, for convenience, we
still denote by $(\beta_n)_{n\in\NN}$. Now $\beta_n\longrightarrow
\beta$, where $\beta\in\left[0,1\right]$. Then by \eqref{LSD:8},
\begin{align}
\beta_n a_n\longrightarrow \beta z.\label{LSD:11}\end{align} We
claim that
\begin{align}
\beta\leq\delta<1.\label{LSD:12}\end{align}

Indeed, suppose to the contrary that $\beta>\delta$. By
\eqref{LSD:11}, $\beta z\in\overline{\dom B}$.
 Then by $0\in\inte\dom B$ and \cite[Theorem~1.1.2(ii)]{Zalinescu},
$\delta z=\tfrac{\delta}{\beta } \beta z\in \inte\overline{\dom B},$
which contradicts \eqref{LSD:4}. Hence \eqref{LSD:12} holds.

We can and do suppose that $\beta_n< 1$  for every $n\in\NN$. By
\eqref{LSD:7},
\begin{align}
\langle a_n,z^*-w^*_n\rangle<0,\quad\forall n\in\NN.\label{Lrtg:11}
\end{align}
Since $(0,0)\in\gra A$, $\langle a_n, a^*_n\rangle\geq 0, \forall n\in\NN$.
Then by \eqref{LSD:3}, we have
\begin{align}&\langle  z, \beta_n a^*_n\rangle+
\langle \beta_n a_n,z^*\rangle-\beta^2_n \langle a_n, a^*_n\rangle
 \geq\langle \beta_n z,a^*_n\rangle+\langle \beta_n a_n,z^*\rangle
 -\beta_n \langle a_n, a^*_n\rangle
\geq\beta_n \langle z,z^*\rangle.\label{LSD:17}\end{align} Hence,
by \eqref{LSD:17},
\begin{align}
\langle  z-\beta_n a_n, \beta_n a^*_n\rangle\geq\langle \beta_n z-\beta_n a_n, z^*\rangle.
\label{LSD:20}
\end{align}
Since $\gra A$ is a linear subspace and $(a_n,a^*_n)\in\gra A$, $(\beta_n a_n, \beta_n a^*_n)\in\gra A$.
 By \eqref{See:1a4}, we have
 \begin{align*}
 \lambda<&\langle z-\beta_n a_n, z^*-w^*_n-\beta_n a^*_n\rangle
 = \langle z-\beta_n a_n, z^*-w^*_n\rangle+\langle z-\beta_n a_n, -\beta_n a^*_n\rangle\\
 &\leq \langle z-\beta_n a_n, z^*-w^*_n\rangle
 -\langle \beta_n z-\beta_n a_n, z^*\rangle\quad\text{(by \eqref{LSD:20})}.
 \end{align*}
Then
\begin{align}
 \lambda
\leq
 \langle z-\beta_n a_n, z^*-w^*_n\rangle
 -\langle \beta_n z-\beta_n a_n, z^*\rangle.\label{LSD:21}
 \end{align}

We again consider two cases:

\emph{Case 1}:
 $(w^*_n)_{n\in\NN}$ is bounded.
 By the Banach-Alaoglu Theorem
(see \cite[Theorem~3.15]{Rudin}), there  exist a weak* convergent \emph{subnet}
$(w^*_\gamma)_{\gamma\in\Gamma}$ of $(w^*_n)_{n\in\NN}$ such that
\begin{align}{w^*_\gamma}\weakstarly  w^*_{\infty}\in X^*.\label{FCGG:9}\end{align}

Combine \eqref{LSD:8}, \eqref{LSD:11} and \eqref{FCGG:9}, we
pass to the limit along the given subnet of \eqref{LSD:21} to deduce that
\begin{align}
 \lambda\leq
 \langle z-\beta z, z^*-w^*_{\infty}\rangle.\label{LSD:22}
 \end{align}
By \eqref{LSD:12}, on dividing by $(1-\beta)$ on both sides of \eqref{LSD:22} we get
\begin{align}
\langle z,z^*-w^*_{\infty}\rangle\geq\frac{\lambda}{1-\beta}>0.\label{LSD:23}
\end{align}
On the other hand, by \eqref{LSD:8} and \eqref{FCGG:9}, on passing to the limit
along the  same subnet in \eqref{Lrtg:11} we see that
\begin{align}
\langle z, z^*-w^*_{\infty}\rangle\leq 0,\end{align}
which contradicts \eqref{LSD:23}.

\emph{Case 2}:
 $(w^*_n)_{n\in\NN}$ is unbounded.
After passing to a subsequence if necessary, we assume that
$\|w^*_n\|\neq 0,\forall n\in\NN$ and that $\|w^*_n\|\longrightarrow +\infty$.
By the Banach-Alaoglu Theorem
again, there  exist a weak* convergent \emph{subnet}
$(w^*_\nu)_{\nu\in I }$ of $(w^*_n)_{n\in\NN}$ such that
\begin{align}\frac{w^*_{\nu}}{\|w^*_{\nu}\|}\weakstarly  y^*_{\infty}\in X^*.\label{FCGG:LSD9}\end{align}

As $0\in\inte\dom B $ and using Fact~\ref{extlem},
there exist $\rho>0$ and $M>0$ such that
\begin{align}
\langle \beta_n a_n, \tfrac{w^*_n}{\|w^*_n\|}\rangle
\geq\rho-\frac{(\|\beta_n a_n\|+\rho) M}{\|w^*_n\|}, \quad \forall n\in\NN.\label{FCG:3}
\end{align}

Combining \eqref{LSD:11} and \eqref{FCGG:LSD9},and  taking the limit in \eqref{FCG:3} along the subnet, we obtain
\begin{align}
\langle \beta z, y^*_{\infty}\rangle
\geq\rho.\label{LSD:30}\end{align}
Then we have $\beta\neq0$ and thus $\beta>0$. By \eqref{LSD:30},
\begin{align}
\langle  z, y^*_{\infty}\rangle
\geq\frac{\rho}{\beta}>0.\label{LSD:34}\end{align}

Dividing by  $\|w^*_n\|$ in \eqref{LSD:21} and  taking the weak$^{*}$ limit in \eqref{LSD:21} along the subnet,
it follows from \eqref{LSD:11} and \eqref{FCGG:LSD9} that
\begin{align}
\langle z-\beta z, -y^*_{\infty}\rangle\geq0.
\label{LSD:32}
\end{align}

By \eqref{LSD:12},
\begin{align*}
 \langle z , y^*_{\infty}\rangle\leq0,\quad\text{which contradicts \eqref{LSD:34}}.
 \end{align*}

Combining all the cases above, we obtain $ z\in\overline{\dom B}$.

Next, we show that
\begin{align}
F_{A+B}(t z,tz^*)\geq t^2\langle z,z^*\rangle,
\quad\forall t\in\left]0,1\right[.\label{See:10}\end{align}
Let $t\in\left]0,1\right[$.
By  $0\in\inte\dom B$, Fact~\ref{f:referee02c} and \cite[Theorem~1.1.2(ii)]{Zalinescu}, we have
\begin{align}
tz\in\inte\overline{\dom B}\label{ReAu:1}.
\end{align}
 Fact~\ref{f:referee02c} implies that
\begin{align}tz\in\inte\dom B.
\end{align}

Set
 \begin{align*}
H_n:= t z+\tfrac{1}{n}U_X,\quad \forall n\in\NN.
\end{align*}
Since $\dom A$ is a linear subspace, $tz\in\overline{\dom A}\backslash {\dom A}$ by \eqref{LSD:2}.
Then $H_n\cap \dom A\neq\varnothing$.
Since $(tz, t z^*)\notin\gra A$ and $tz\in H_n$, and
$A$ is of type (FPV) by Fact~\ref{S:referee01},
 there exists $(b_n,b^*_n)_{n\in\NN}$ in $\gra A$
such that $b_n\in H_n$  and
\begin{align}
\langle t z,b^*_n\rangle+\langle b_n,t z^*\rangle-
\langle b_n,b^*_n\rangle>t^2\langle z,z^*\rangle,\quad \forall n\in\NN.\label{see:20}
\end{align}

As $t z\in\inte\dom B$ and $b_n\longrightarrow t z$, by Fact~\ref{extlem},
 there exist $N\in\NN$ and  $K>0$ such that
\begin{align}b_n\in\inte\dom B \quad\text{and}\quad \sup_{v^*\in B(b_n)}\| v^*\|
\leq K,\quad \forall n\geq N.\label{See:1a2}
\end{align}
Hence
\begin{align}
F_{A+B}(t z,t z^*)
&\geq\sup_{\{ c^*\in B(b_n)\}}\left[\langle b_n,t z^*\rangle
+\langle t z,b_n^*\rangle-\langle b_n,b_n^*\rangle
+\langle t z-b_n, c^*\rangle \right],\quad \forall n\geq N\nonumber\\
&\geq\sup_{\{ c^*\in B(b_n)\}}\left[t^2\langle z,z^*\rangle
+\langle t z-b_n,c^*\rangle \right],\quad \forall n\geq N
\quad\text{(by \eqref{see:20})}\nonumber\\
&\geq\sup\left[t^2\langle z,z^*\rangle
-K\|t z-b_n\| \right],\quad \forall n\geq N\quad\text{(by \eqref{See:1a2})}\nonumber\\
&\geq t^2\langle z,z^*\rangle\quad\text{(by $b_n\longrightarrow t z$)}.\label{See:1a3}
\end{align}
Hence $
F_{A+B}(t z,t z^*)\geq t^2\langle z,z^*\rangle$.

We have proved that \eqref{See:10} holds.
Since $(0,0)\in\gra (A+ B)$ and $A+B$ is monotone,
 we have $F_{A+B}(0,0)=\langle 0,0\rangle=0$.
Since $F_{A+B}$ is convex, \eqref{See:10} implies that
\begin{align*}
tF_{A+B}(z,z^*)=t F_{A+B}(z,z^*)+(1-t)F_{A+B}(0,0)\geq F_{A+B}(t z,tz^*)\geq t^2\langle z,z^*\rangle,
\quad\forall t\in\left]0,1\right[.
\end{align*}
Letting $t\longrightarrow 1^{-}$ in the above inequality, we obtain
\begin{align}
F_{A+B}(z,z^*)\geq \langle z,z^*\rangle.
\end{align}
Therefore,
 \eqref{e0:ourgoal} holds, and $A+B$ is maximally monotone.
\end{proof}

\begin{remark}
Theorem~\ref{PGV:1} generalizes the main results in \cite{BWY4,BWY9,Yao2}.

\end{remark}

We now establish the promised corollary:

\begin{corollary}[FPV property of the sum]\label{domain:L1}
Let $A:X\To X^*$ be  a maximally monotone linear relation. Let $B:X\rightrightarrows X^*$ be maximally monotone. If
 $\dom  A\cap\inte\dom B\neq\varnothing$,
then $A+B$ is of type  $(FPV)$.

\end{corollary}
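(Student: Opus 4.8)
The plan is to deduce Corollary~\ref{domain:L1} from Theorem~\ref{PGV:1} by invoking the criterion of Fact~\ref{f:referee02a}: to show that a maximally monotone operator $A+B$ is of type (FPV), it suffices to verify that for every closed convex subset $C$ of $X$ with $\dom(A+B)\cap\inte C\neq\varnothing$, the operator $(A+B)+N_C$ is maximally monotone. So first I would fix such a $C$. Note that $\dom(A+B)=\dom A\cap\dom B$, so the hypothesis $\dom(A+B)\cap\inte C\neq\varnothing$ gives a point in $\dom A\cap\dom B\cap\inte C$.

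Next I would rewrite $(A+B)+N_C=A+(B+N_C)$ and try to apply Theorem~\ref{PGV:1} with $A$ in the role of the maximally monotone linear relation and $B+N_C$ in the role of the maximally monotone operator. Two facts must be checked. First, $B+N_C$ must be maximally monotone: since $B$ is maximally monotone and $C$ is closed convex with $\inte C\cap\dom B\supseteq\inte C\cap\dom A\cap\dom B\neq\varnothing$, the classical sum theorem with one operator having interior (here $N_C$, since $\inte C\neq\varnothing$ forces $\inte\dom N_C=\inte C\neq\varnothing$), or equivalently Rockafellar's constraint-qualification result as quoted in Fact~\ref{f:referee02c} together with standard results, yields that $B+N_C$ is maximally monotone. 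Second, I need the constraint qualification of Theorem~\ref{PGV:1} for the pair $(A,\,B+N_C)$, namely $\dom A\cap\inte\dom(B+N_C)\neq\varnothing$. Here $\dom(B+N_C)=\dom B\cap C$, and since $0\in\inte C$ may be arranged by translation, one has (using Fact~\ref{f:referee02c} and \cite[Theorem~1.1.2(ii)]{Zalinescu}) that $\inte(\dom B\cap C)=\inte\dom B\cap\inte C$ is nonempty and meets $\dom A$ precisely because $\dom A\cap\inte\dom B\cap\inte C\neq\varnothing$; actually one should first shrink: pick $a\in\dom A\cap\inte\dom B\cap\inte C$, which exists since $\dom A\cap\inte\dom B\neq\varnothing$ is assumed and intersecting with $\inte C$ is guaranteed by the Fact~\ref{f:referee02a} hypothesis after a small convexity argument. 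Then Theorem~\ref{PGV:1} applies and $A+(B+N_C)=(A+B)+N_C$ is maximally monotone.

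Having shown $(A+B)+N_C$ is maximally monotone for every admissible $C$, Fact~\ref{f:referee02a} immediately gives that $A+B$ is of type (FPV), completing the proof. (One also needs $A+B$ itself to be maximally monotone to invoke Fact~\ref{f:referee02a}, but that is exactly Theorem~\ref{PGV:1}.)

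The main obstacle I anticipate is the bookkeeping around the constraint qualification: one must be careful that $\dom A\cap\inte\dom(B+N_C)\neq\varnothing$, i.e. that a point of $\dom A$ lies in $\inte\dom B\cap\inte C$, not merely in $\dom B\cap\inte C$ or in $\inte\dom B\cap C$. This requires combining the standing hypothesis $\dom A\cap\inte\dom B\neq\varnothing$ with the test-set hypothesis $\dom(A+B)\cap\inte C\neq\varnothing$ via a convexity/line-segment argument (in the spirit of Fact~\ref{Ll:l1}) to produce a single common point in $\dom A\cap\inte\dom B\cap\inte C$; the identity $\inte(\dom B\cap C)=\inte\dom B\cap\inte C$ then needs $\inte\dom B\cap\inte C\neq\varnothing$, which is what that argument supplies. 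Everything else is a direct citation of Theorem~\ref{PGV:1} and Fact~\ref{f:referee02a}.
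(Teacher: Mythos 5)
Your proposal is correct and follows essentially the same route as the paper: invoke Theorem~\ref{PGV:1} for maximal monotonicity of $A+B$, use the segment argument between a point of $\dom A\cap\inte\dom B$ and a point of $\dom(A+B)\cap\inte C$ to produce a point of $\dom A\cap\inte\dom B\cap\inte C$, conclude that $B+N_C$ is maximally monotone (the paper cites Fact~\ref{f:referee01}), apply Theorem~\ref{PGV:1} to the pair $A$ and $B+N_C$, and finish with Fact~\ref{f:referee02a}. The only cosmetic difference is your citation for the maximality of $B+N_C$, where the Voisei--Z\u{a}linescu result (Fact~\ref{f:referee01}) rather than the reflexive-space sum theorem is the appropriate reference.
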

\begin{proof} By Theorem~\ref{PGV:1}, $A+B$ is maximally monotone.
Let $C$ be a nonempty closed convex subset of $X$,
and suppose that $\dom (A+B) \cap \inte C\neq \varnothing$.
Let $x_1\in \dom A \cap \inte\dom B$ and $x_2\in \dom (A+B) \cap \inte C$.
Then $x_1,x_2\in\dom A$, $x_1\in\inte\dom B$ and
$x_2\in\dom B\cap \inte C$.
Hence $\lambda x_1+(1-\lambda)x_2\in\inte\dom B$ for every $\lambda\in
\left]0,1\right]$ by Fact~\ref{f:referee02c} and \cite[Theorem~1.1.2(ii)]{Zalinescu} and so there exists $\delta\in\left]0,1\right]$ such that
$\lambda x_1+(1-\lambda)x_2\in\inte C$ for every $\lambda\in\left[0,\delta\right]$.

Thus, $\delta x_1+(1-\delta)x_2\in\dom A\cap\inte\dom B\cap\inte C$.
By Fact~\ref{f:referee01} or \cite[Theorem~9(i)]{Bor2}, $B+N_C$ is maximally monotone.
Then, by Theorem~\ref{PGV:1} (applied  $A$ and $B+N_C$ to $A$ and $B$),
$A+B+N_C=A+(B+N_C)$ is maximally monotone.
By Fact~\ref{f:referee02a},   $A+B$ is of type  $(FPV)$.
\end{proof}

Note that with $A=0$ we recover the fact that a maximally monotone mapping is type (FPV) when its domain has nonempty interior.
\begin{remark}
The proof of Corollary~\ref{domain:L1} was adapted from that of \cite[Corollary~3.3]{Yao2}. Moreover,
Corollary~\ref{domain:L1} generalizes \cite[Corollary~3.3]{Yao2}.
\end{remark}

\section{Further Consequences}\label{sec:more}

The next result reduces all sum theorems to linear ones.

\begin{proposition}\label{ProC:V1}
Let $A, B: X\rightrightarrows X^*$ be monotone such that $\dom A\cap\dom B\neq\varnothing$,  let
\[C:=\{(x,x)\in X\times X\mid x\in X\}.\] Let $T: X \times X \rightrightarrows X^*\times X^*$ be defined
   by \[T(x,y):=\big(Ax,By\big).\]
   Then $A+B$ is maximally monotone if and only if $ T+N_C$ is maximally monotone.
\end{proposition}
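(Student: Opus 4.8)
The plan is to set up an explicit correspondence between the graph of $A+B$ (lifted suitably into $X\times X\times X^*\times X^*$) and the graph of $T+N_C$, and then check that each side is monotone and maximally so precisely when the other is. First I would record the normal-cone computation: for $(x,x)\in C$ we have $N_C(x,x)=\{(u^*,-u^*)\mid u^*\in X^*\}=C^\perp$ (identifying $(X\times X)^*$ with $X^*\times X^*$), and $N_C(x,y)=\varnothing$ when $x\neq y$. Hence
\begin{align*}
\gra(T+N_C)=\big\{\big((x,x),(a^*+u^*,\,b^*-u^*)\big)\mid x\in\dom A\cap\dom B,\ a^*\in Ax,\ b^*\in Bx,\ u^*\in X^*\big\}.
\end{align*}
The key observation is that, writing $z^*=a^*+u^*$ and $w^*=b^*-u^*$, the pair $(z^*,w^*)$ ranges exactly over those pairs with $z^*+w^*\in Ax+Bx=(A+B)x$, while $z^*$ (equivalently $u^*$) is free. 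So $T+N_C$ is essentially $A+B$ "with an extra free parameter on the diagonal."

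Next I would prove the monotonicity equivalences. Monotonicity of $T+N_C$ on the diagonal reduces, for two diagonal points $(x,x)$ and $(y,y)$, to
\[
\big\langle (x,x)-(y,y),\ (z^*_1-z^*_2,\,w^*_1-w^*_2)\big\rangle=\langle x-y,\ (z^*_1+w^*_1)-(z^*_2+w^*_2)\rangle,
\]
and since $z^*_i+w^*_i\in(A+B)x_i$ (resp.\ $(A+B)y_i$) this is exactly the monotonicity inequality for $A+B$; conversely every element of $\gra(A+B)$ arises this way (take $u^*=0$). Thus $T+N_C$ is monotone iff $A+B$ is monotone — but both are automatically monotone here, so the real content is in the maximality direction. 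For maximality I argue by contrapositive / by the extension characterization. Suppose $A+B$ is maximally monotone; let $\big((p,q),(r^*,s^*)\big)$ be monotonically related to $\gra(T+N_C)$. Testing against the diagonal points with all choices of the free parameter $u^*$ forces, for each fixed $x\in\dom A\cap\dom B$ and each $u^*$,
\[
\langle p-x,\ r^*-a^*-u^*\rangle+\langle q-x,\ s^*-b^*+u^*\rangle\ge 0;
\]
letting $u^*$ range over all of $X^*$ (and scaling) forces $\langle p-x,u^*\rangle=\langle q-x,u^*\rangle$ for all $u^*$, hence $p=q$. With $p=q$ the remaining inequality becomes $\langle p-x,\ (r^*+s^*)-(a^*+b^*)\rangle\ge0$ for all $(x,a^*+b^*)\in\gra(A+B)$, i.e.\ $(p,r^*+s^*)$ is monotonically related to $\gra(A+B)$, so by maximality $r^*+s^*\in(A+B)p$, and then $\big((p,p),(r^*,s^*)\big)\in\gra(T+N_C)$ (choose $u^*$ so that $r^*=a^*+u^*$ for a suitable split). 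Hence $T+N_C$ is maximally monotone. The reverse implication is the same argument read backwards: any $(p,p^*)$ monotonically related to $\gra(A+B)$ yields $\big((p,p),(p^*,0)\big)$ monotonically related to $\gra(T+N_C)$ — here one checks against an arbitrary element $\big((x,x),(a^*+u^*,b^*-u^*)\big)$ and the $u^*$ terms cancel because both base points lie on the diagonal — so maximality of $T+N_C$ gives $p^*\in(A+B)p$.

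The main obstacle, and the step deserving the most care, is the maximality direction: one must be careful that $T+N_C$ is maximal as an operator on the \emph{whole} product space $X\times X$, so a candidate extension point $\big((p,q),(r^*,s^*)\big)$ need not a priori have $p=q$; the argument above handles this by exploiting the freedom in the $N_C$-component ($u^*$ arbitrary) to force $p=q$ first, after which everything collapses to the $A+B$ picture. A secondary point to verify cleanly is the identification $(X\times X)^*\cong X^*\times X^*$ with the pairing $\langle(x,y),(x^*,y^*)\rangle=\langle x,x^*\rangle+\langle y,y^*\rangle$ and the resulting formula for $N_C$; this is routine but must be stated so that the domain condition $\dom A\cap\dom B\neq\varnothing$ (which guarantees $\dom T\cap C\neq\varnothing$, so $N_C$ genuinely contributes and $\gra(T+N_C)\neq\varnothing$) is visibly used. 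No deep tools are needed — Fact~\ref{f:referee01} or Rockafellar-type sum theorems are \emph{not} invoked; the proposition is purely a bookkeeping reduction, which is exactly why it "reduces all sum theorems to linear ones."
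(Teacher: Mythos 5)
Your proposal is correct and follows essentially the same route as the paper: compute $N_C(x,x)=\{(u^*,-u^*)\mid u^*\in X^*\}$, use the freedom in the $N_C$-component to force a monotonically related point onto the diagonal, and then collapse the inequality to the monotone relation for $A+B$ (and conversely lift a point monotonically related to $\gra(A+B)$ to the diagonal, where the $u^*$ terms cancel). The only cosmetic difference is your choice of $(p^*,0)$ where the paper uses $(\tfrac{z^*}{2},\tfrac{z^*}{2})$ in the reverse implication; both work.
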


\begin{proof}
We have
\begin{align}
N_C(x,x)=\big\{(x^*,-x^*)\mid x^*\in X^*\big\},\quad \forall x\in X.\label{ProCV1:e1}
\end{align}
``$\Rightarrow$":  Clearly, $T+N_C$ is monotone.   Let $\big((x_0, y_0), (x^*_0, y^*_0)\big)\in (X\times X)\times (X^*\times X^*)$ be monotonically related to $\gra (T+N_C)$.
Now we show that $\big((x_0, y_0), (x^*_0, y^*_0)\big)\in\gra (T+N_C)$.
Then by \eqref{ProCV1:e1},
\begin{align*}
&\Big\langle (x_0, y_0)-(a,a),(x^*_0, y^*_0)-(a^*, b^*)-(x^*,-x^*)\Big\rangle\geq0,\quad
\forall (a, a^*)\in\gra A, (a,b^*)\in\gra B,\\
&\quad \forall x^*\in X^*\\
&\Rightarrow\Big\langle (x_0-a, y_0-a),(x^*_0-a^*-x^*, y^*_0-b^*+x^*)\Big\rangle\geq0,\quad
\forall (a, a^*)\in\gra A, (a,b^*)\in\gra B,\\
&\quad \forall x^*\in X^*\\
&\Rightarrow \big\langle x_0-a, x^*_0-a^*\big\rangle+\big\langle y_0-a, y^*_0-b^*\big\rangle\geq0,\quad  \big\langle x_0-a, -x^*\big\rangle+\big\langle y_0-a, x^*\big\rangle=0,\\
&\quad
\forall (a, a^*)\in\gra A, (a,b^*)\in\gra B,\forall x^*\in X^*\\
&\Rightarrow \big\langle x_0-a, x^*_0-a^*\big\rangle+\big\langle y_0-a, y^*_0-b^*\big\rangle\geq0,\, x_0=y_0,\quad
\forall (a, a^*)\in\gra A, (a,b^*)\in\gra B\\
&\Rightarrow \big\langle x_0-a, x^*_0+y^*_0-a^*-b^*\big\rangle\geq0,\, x_0=y_0,\quad
\forall (a, a^*)\in\gra A, (a,b^*)\in\gra B\\
&\Rightarrow x^*_0+y^*_0\in(A+B)x_0,\, x_0=y_0\quad\text{(since $A+B$ is maximal monotone)}\\
&\Rightarrow \exists v^*\in X^*,\quad x^*_0+v^*\in Ax_0,  y^*_0-v^*\in Bx_0,\, x_0=y_0\\
&\Rightarrow \big((x_0, y_0), (x^*_0, y^*_0)\big)\in\gra (T+N_C)\quad\text{(by \eqref{ProCV1:e1})}.\\.
\end{align*}
Hence $T+N_C$ is maximally monotone.
\allowdisplaybreaks

``$\Leftarrow$":  Let $(z, z^*)\in X\times X^*$ be monotonically related to $\gra (A+B)$.
\begin{align*}
&\big\langle z-a, z^*-a^*-b^*\rangle\geq0,\quad\forall (a, a^*)\in\gra A, (a,b^*)\in\gra B\\
&\Rightarrow \big\langle z-a, \frac{z^*}{2}-a^*\big\rangle+\big\langle z-a, \frac{z^*}{2}-b^*\big\rangle+
\big\langle z-a, -x^*\big\rangle+\big\langle z-a, x^*\big\rangle\geq0,\\
&\quad
\forall (a, a^*)\in\gra A, (a,b^*)\in\gra B,\forall x^*\in X^*\\
&\Rightarrow \big\langle z-a, \frac{z^*}{2}-a^*-x^*\big\rangle+\big\langle z-a, \frac{z^*}{2}-b^*+x^*\big\rangle\geq0,\\
&\quad
\forall (a, a^*)\in\gra A, (a,b^*)\in\gra B,\forall x^*\in X^*\\
&\Rightarrow \Big\langle (z, z)-(a,a),(\frac{z^*}{2}, \frac{z^*}{2})-(a^*, b^*)-(x^*,-x^*)\Big\rangle\geq0,\quad
\forall (a, a^*)\in\gra A, (a,b^*)\in\gra B,\\
&\quad \forall x^*\in X^*\\
&\Rightarrow \Big\langle (z, z)-\mathbf{w},(\frac{z^*}{2}, \frac{z^*}{2})-\mathbf{w^*}\Big\rangle\geq0,\quad
\forall (\mathbf{w}, \mathbf{w^*})\in\gra (T+N_C)\quad\text{(by \eqref{ProCV1:e1})}\\
&\Rightarrow (\frac{z^*}{2}, \frac{z^*}{2})\in (T+N_C)(z,z)\quad\text{(since $T+N_C$ is maximal monotone)}\\
&\Rightarrow \exists v^*,\quad
 (\frac{z^*}{2}, \frac{z^*}{2})\in (Az, Bz)+(v^*,-v^*)\quad\text{(by \eqref{ProCV1:e1})}.\\
&\Rightarrow z^*=\frac{z^*}{2}+\frac{z^*}{2}\in (A+B)z.
\end{align*}
Hence $A+B$ is maximally monotone.
\end{proof}

It is important to note that exchanging the roles of $A$ and $B$ in Theorem \ref{PGV:1} leads to a much tougher linear sum problem.

\begin{remark}\label{rem42}
Let $A,B$ be maximally monotone such that $\dom A\cap\inte\dom B\neq\varnothing$ (i.e., they satisfy Rockafellar's constraint qualification), let $T, C$ be defined as in Proposition~\ref{ProC:V1}.
 Then we have \[\bigcup_{\lambda>0} \lambda\left[\dom T-\dom N_C\right]=X\times X.\]  Note that $T$ is maximally monotone (see the corresponding lines
 of proof of \cite[Proposition~3.13]{BY2}) and $N_C$ is a maximally monotone linear relation.  If the following conjecture is true then by Proposition~\ref{ProC:V1}, $A+B$ is maximally monotone
 and hence the general sum theorem holds.
\begin{quote}\emph{\textbf{Conjecture}
Let $S:X\To X^*$ be a maximally monotone linear relation,
and let $T: X\rightrightarrows X^*$ be maximally monotone
such that $\bigcup_{\lambda>0} \lambda\left[\dom S-\dom T\right]=X$.
Then $S+T$ is maximally monotone.}
\end{quote}
\end{remark}

In a related manner we have:

\begin{corollary}
Let $A:X\To X^*$ be a maximally monotone linear relation,
and let $B: X\rightrightarrows X^*$ be maximally monotone
such that $\dom A\cap\inte\dom B\neq\varnothing$.
Let
$C:=\{(x,x)\in X\times X\mid x\in X\}$ and let $T: X \times X \rightrightarrows X^*\times X^*$ be defined
   by $T(x,y):=\big(Ax,By\big)$.  Then
$T+N_C$ is maximally monotone.

\end{corollary}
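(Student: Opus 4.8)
The plan is to combine the two results we have just proved: Theorem~\ref{PGV:1} (the linear sum theorem) and Proposition~\ref{ProC:V1}, which reduces the maximality of $T+N_C$ to that of $A+B$. First I would note that under the hypothesis $\dom A\cap\inte\dom B\neq\varnothing$ we have in particular $\dom A\cap\dom B\neq\varnothing$, so Proposition~\ref{ProC:V1} applies to this $A$, $B$, with $C$ and $T$ as defined in its statement. Thus $T+N_C$ is maximally monotone if and only if $A+B$ is maximally monotone. But Theorem~\ref{PGV:1} tells us precisely that $A+B$ is maximally monotone, since $A$ is a maximally monotone linear relation, $B$ is maximally monotone, and $\dom A\cap\inte\dom B\neq\varnothing$. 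Therefore $T+N_C$ is maximally monotone, which is exactly the assertion.

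There is essentially no obstacle here: the corollary is a one-line consequence of the two preceding results, provided one checks that the hypotheses of both are met. The only point worth remarking is that one should verify the hypotheses of Proposition~\ref{ProC:V1} are the weaker $\dom A\cap\dom B\neq\varnothing$, which is implied by $\dom A\cap\inte\dom B\neq\varnothing$; and that the hypotheses of Theorem~\ref{PGV:1} are met verbatim. (One could alternatively observe directly that $T$ is a product of a maximally monotone linear relation and a maximally monotone operator, hence its "linear part" interacts with $N_C$ appropriately, but routing through Proposition~\ref{ProC:V1} is cleanest.) So the proof reads: By the hypotheses, $\dom A\cap\dom B\neq\varnothing$, so Proposition~\ref{ProC:V1} applies; by Theorem~\ref{PGV:1}, $A+B$ is maximally monotone; hence by Proposition~\ref{ProC:V1}, $T+N_C$ is maximally monotone.
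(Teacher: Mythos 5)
Your proof is correct and is exactly the paper's argument: the paper's proof reads ``Apply Theorem~\ref{PGV:1} and Proposition~\ref{ProC:V1} directly,'' and you have simply spelled out the (trivially satisfied) hypothesis check $\dom A\cap\inte\dom B\neq\varnothing\Rightarrow\dom A\cap\dom B\neq\varnothing$. Nothing further is needed.
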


\begin{proof}
Apply Theorem~\ref{PGV:1} and Proposition~\ref{ProC:V1} directly.
\end{proof}

In consequence, we are left with the following unresolved and interesting questions.
\begin{problem}
Let $A:X\rightarrow X^*$ be a continuous monotone linear operator,
and let $B:X\rightrightarrows X^*$ be maximally monotone.
  Is
$A+B$ necessarily maximally monotone $?$
\end{problem}

\begin{problem}
 Let $f:X\rightarrow \RX$ be a proper lower semicontinuous convex function,
and let $B:X\rightrightarrows X^*$ be maximally monotone with $\dom \partial f\cap\inte\dom B\neq\varnothing$.
  Is
$\partial f +B$ necessarily maximally monotone $?$
\end{problem}

Finally we recapitulate the conjecture after Remark \ref{rem42}.

\begin{problem}
Let $A:X\To X^*$ be a maximally monotone linear relation,
and let $B: X\rightrightarrows X^*$ be maximally monotone
such that $\bigcup_{\lambda>0} \lambda\left[\dom A-\dom B\right]=X$.
Is $A+B$ necessarily maximally monotone $?$

\end{problem}

\section*{Acknowledgment}
 The author thanks
Dr.\ Robert Csetnek for bringing us to consider various cases of the sum problem and also for his helpful comments.
Jonathan  Borwein and Liangjin Yao were partially supported
by various Australian Research Council grants.

\end{document}